\newlength{\defbaselineskip}
\newcommand{\setlinespacing}[1]%
           {\setlength{\baselineskip}{#1 \defbaselineskip}}
\theoremstyle{plain}
\newtheorem{thm}{Theorem}[section]
\newtheorem{cor}[thm]{Corollary}
\newtheorem{lem}[thm]{Lemma}
\newtheorem{prop}[thm]{Proposition}
\theoremstyle{definition}
\newtheorem{defn}{Definition}[section]
\newtheorem{rmk}{Remark}[section]
\newcommand{\eps}{\varepsilon}
\DeclareMathOperator*{\esssup}{esssup}
\newcommand{\cH}{\mathcal{H}}
\newcommand{\cL}{\mathcal{L}}
\newcommand{\cA}{\mathcal{A}}
\newcommand{\cS}{\mathcal{S}}
\newcommand{\bP}{\mathbb{P}}
\newcommand{\bR}{\mathbb{R}}
\newcommand{\bN}{\mathbb{N}}
\newcommand{\bV}{\mathbb{V}}
\newcommand{\bL}{\mathbb{L}}
\newcommand{\sF}{\mathscr{F}}
\makeatletter\@addtoreset{equation}{section} \makeatother
\begin{document}

\title{ $L^2$-Theory of Linear Degenerate SPDEs and $L^p$ ($p>0$) Estimates for the Uniform Norm of Weak Solutions
}

\author{Jinniao Qiu\footnotemark[1]  }
\footnotetext[1]{Department of Mathematics \& Statistics, University of Calgary, 2500 University Drive NW, Calgary, AB T2N 1N4, Canada. \textit{E-mail}: \texttt{jinniao.qiu@ucalgary.ca}. 

The author is partially supported by the National Science and Engineering Research Council of Canada (NSERC) and by the start-up funds from the University of Calgary.}

%
%

\maketitle

\begin{abstract}
In this paper, we are concerned with possibly degenerate stochastic partial differential equations (SPDEs).  An $L^2$-theory is introduced, from which we derive a H\"ormander-type theorem with an analytical approach. With the method of De Giorgi iteration, we obtain  the maximum principle which states the $L^p$ ($p>0$) estimates for the time-space uniform norm of weak solutions.

\end{abstract}

{\bf Mathematics Subject Classification (2010):} 60H15, 35R60, 35D30

{\bf Keywords:} stochastic partial differential equation, $L^2$-theory, H\"ormander theorem, maximum principle, De Giorgi iteration


\section{Introduction}

Let $(\Omega,{\sF},\{{\sF}_t\}_{t \geq 0},\bP)$ be a complete filtered probability space, on which a $d_1$-dimensional Wiener process $W=(W_t)_{t\geq 0}$ is well defined.  
We consider SPDE of the form
\begin{equation}\label{SPDE}
  \left\{\begin{array}{l}
  \begin{split}
  du(t,x)=\,&\displaystyle \left[ \frac{1}{2}(L_k^2+M_k^2)u
         +{b}^jD_ju+cu+f+L_k'g^k+M_k'h^k
                \right](t,x)\, dt\\ &\displaystyle
           +\left[M_ku+\beta^k u+h^k\right](t,x)\, dW_{t}^{k}, \quad
                     (t,x)\in Q:= [0,T]\times \bR^d;\\
    u(0,x)=\, &\underline{u}_0(x), \quad x\in\bR^d.
    \end{split}
  \end{array}\right.
\end{equation}
Here and throughout this paper,  the summation over repeated indices is enforced unless stated otherwise,  $T\in (0,\infty)$, $D=(D_1,\dots,D_d)$ is the gradient operator, and $L_k=\sigma^{jk}D_j$, $M_k=\theta^{jk}D_j$, $L_k'=D_j(\sigma^{jk}\cdot)$, $M_k'=D_j(\theta^{jk}\cdot)$, for $k=1,\dots,d_1$. SPDE \eqref{SPDE} is said to be degenerate when it fails to satisfy the super-parabolicity (\textbf{SP}): There exists   $\lambda\in(0,\infty)$  such that
   \begin{align*}
       \sigma^{ik}\sigma^{jk}(t,x)\xi^i\xi^j\geq \lambda |\xi|^2\,\,\,\,\,a.s.,\,\,\,\,\,\forall\, (t,x,\xi)\in [0,T]\times\bR^d\times\bR^d.
   \end{align*}

We first investigate the solvability of linear, possibly degenerate SPDEs in $L^2$-spaces. An $L^2$-theory on linear degenerate SPDEs was initiated by Krylov and Rozovskii \cite{krylov_1986charcteristic,krylov-1977-Deg-L2}, and it was developed recently by \cite{Deg-SPDE-3-2014,Deg-SPDE-2014,krylov2013hypoellipticity,Deg-SPDE-2-2014}. Along this line, obtaining a solution of SPDE \eqref{SPDE} in space $L^2(\Omega;C([0,T];H^m))$ not only requires that $f+L_k'g^k+M_k'h^k$ is $H^m$-valued but also assumes that $h^k$ is $H^{m+1}$-valued, while in this work, $f,g$ and $h$ are allowed to be just $H^m$-valued.
Moreover, we get the estimate $L_ku\in L^2(\Omega\times[0,T];H^m) $, and under a H\"ormander-type condition, we further have $u\in L^2(\Omega\times[0,T];H^{m+\eta})$ for some $\eta\in(0,1]$. For the proof, we apply the a priori estimates for solutions of the  approximating \textit{super-parabolic} SPDEs in line with the applications of pseudo-differential operator theory. As a byproduct, a H\"ormander-type theorem for SPDE \eqref{SPDE} is derived from the established $L^2$-theory and an estimate on the Lie bracket (Lemma \ref{lem-lie-bracket}).

Most importantly, we prove the maximum principle for the weak solution of SPDE \eqref{SPDE}. More precisely, we obtain the $L^p$ ($p>0$) estimates for the time-space uniform norm of weak solutions, i.e., under suitable integrability assumptions on $\underline{u}_0,f,g$ and $h$, we have
\begin{thm}\label{thm-MP-intr}
Let the H\"ormander-type condition $({\mathcal H} )$ hold. For the weak solution $u$ of SPDE \eqref{SPDE}, we have for any $p\in (0,\infty)$
\begin{align*}
E\|u^{\mp}\|^p_{L^{\infty}(Q)} \leq C\,\Xi (\underline{u}_0^{\mp},f^{\mp},g,h),
\end{align*}
where $\Xi (\underline{u}_0^{\mp},f^{\mp},g,h)$ is expressed in terms of certain norms of $(\underline{u}_0,f^{\mp},g,h)$, and  the constant $C$ depends on $d,p,T$ and the quantities related to the structure coefficients of SPDE \eqref{SPDE}.
\end{thm}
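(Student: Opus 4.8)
The plan is to run a stochastic De~Giorgi iteration over the super-level sets of the solution; the degeneracy of \eqref{SPDE} is compensated for, at the level of the energy estimate, by the subelliptic gain that the H\"ormander-type condition $(\mathcal H)$ supplies through the Lie bracket estimate (Lemma~\ref{lem-lie-bracket}). Since $-u$ solves an SPDE of the same structural form with data $(-\underline u_0,-f,-g,-h)$ and since $u^-=(-u)^+$, it suffices to bound $u^+$. For a level $k\ge 0$ write $v_k:=(u-k)^+$ and $A_k:=\{(t,x)\in Q:u(t,x)>k\}$, a random subset of $Q$; by the regularity of the weak solution furnished by the $L^2$-theory and the chain rule for Sobolev functions one has $D_jv_k=\mathbf 1_{A_k}D_ju$, hence $L_kv_k=\mathbf 1_{A_k}L_ku$, so the quantities below are well defined.

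First I would establish a Caccioppoli-type inequality for the truncations. Applying It\^o's formula to $\|v_k(t)\|_{L^2(\bR^d)}^2$ and integrating by parts in the term $\tfrac12(L_k^2+M_k^2)u$, the leading contribution $-\tfrac12\sum_k\|M_kv_k\|_{L^2(\bR^d)}^2$ of $M_k^2$ is cancelled by the leading part of the quadratic variation of the noise $[M_ku+\beta^ku+h^k]\,dW^k_t$, while the cross term $\int_{\bR^d}h^kM_kv_k\,dx$ coming from that quadratic variation is cancelled in turn by the contribution of the free term $M_k'h^k$; every remaining term that is linear in $L_kv_k$ or $M_kv_k$ is then either absorbed, by Young's inequality, into the surviving coercive term $-\tfrac12\sum_k\|L_kv_k\|_{L^2(\bR^d)}^2$ (this handles $L_k'g^k$), or integrated by parts once more into a multiple of $\|v_k\|_{L^2(\bR^d)}^2+k^2|A_k|$, using the bounds on the derivatives of $\sigma,\theta,b,\beta$. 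Pairing $f^+$ against $v_k$ in dual Lebesgue spaces, estimating the martingale part by the Burkholder--Davis--Gundy inequality — after one further integration by parts that strips the derivative off the stochastic integrand, so that no uncontrolled $\|M_kv_k\|_{L^2(\bR^d)}^2$ is produced — and closing with Gronwall's lemma, we reach an estimate of the form
\begin{align*}
\mathcal E_k:=E\Big[\sup_{t\le T}\|v_k(t)\|_{L^2(\bR^d)}^2+\int_0^T\!\sum_k\|L_kv_k(s)\|_{L^2(\bR^d)}^2\,ds\Big]\le C\,\mathcal D_k,
\end{align*}
where $\mathcal D_k$ is assembled from suitable norms of $(\underline u_0^+,f^+,g,h)$ and of $k\,\mathbf 1_{A_k}$ over the region $A_k$, and in particular $\mathcal D_k\to 0$ as $k\to\infty$.

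The next step converts this $L_k$-coercivity into genuine smoothing. By the subelliptic inequality underlying the H\"ormander-type analysis — concretely, an estimate of the form $\|w\|_{H^\eta}^2\le C\big(\sum_k\|L_kw\|_{L^2(\bR^d)}^2+\|w\|_{L^2(\bR^d)}^2\big)$, valid under $(\mathcal H)$ by Lemma~\ref{lem-lie-bracket} — applied to $w=v_k(s)$, followed by the Sobolev embedding $H^\eta(\bR^d)\hookrightarrow L^{q_*}(\bR^d)$ (with $q_*=2d/(d-2\eta)$ if $d>2\eta$ and any finite exponent otherwise) and, in the manner of the Ladyzhenskaya inequality, interpolation of the space integral against $\sup_{t\le T}\|v_k(t)\|_{L^2(\bR^d)}^2$, the bound $\mathcal E_k\le C\mathcal D_k$ upgrades to a control of $v_k$ in the Lebesgue space $L^{\rho}(Q)$ for some $\rho>2$ depending only on $d$ and $\eta$, still by $C\mathcal D_k$ in expectation. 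From here the iteration is the classical one: for $0\le j<k$ one has $v_j\ge k-j$ on $A_k$, so $(k-j)^{\rho}|A_k|\le\|v_j\|_{L^{\rho}(Q)}^{\rho}$, whose right-hand side is, by the energy bound, a superlinear function of the level-$j$ quantities; this yields a fast-geometric recursion of De~Giorgi--Stampacchia type along a sequence of levels $K_n\uparrow K_\infty$ whose output is an almost sure bound $\|u^+\|_{L^\infty(Q)}\le\Lambda$, with $\Lambda$ an explicit combination of norms of $(\underline u_0^+,f^+,g,h)$. One may streamline this — and at the same time make it robust in $p$ — by first subtracting from $u$ a data-dependent increasing adapted process $\theta$ that absorbs the free terms, running the recursion on the forcing-light truncations $(u-\theta-k)^+$ to obtain $\|u^+\|_{L^\infty(Q)}\le\|\theta\|_{L^\infty(Q)}+N$ with $N$ deterministic. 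Taking $p$-th moments then gives $E\|u^+\|_{L^\infty(Q)}^p\le C\,\Xi(\underline u_0^+,f^+,g,h)$, and the estimate for $u^-$ follows by the reduction of the first paragraph.

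The conceptual heart of the argument, and the one place where $(\mathcal H)$ is indispensable, is the subelliptic upgrade from $L_k$-derivatives to the fractional gain $H^\eta$: without it the degenerate energy inequality carries no compactness and the De~Giorgi scheme cannot even be initiated. The main technical difficulty is to reach the full range $p\in(0,\infty)$ — in particular $p<1$ — with a sharp $\Xi$: one cannot simply interpolate down from $p=2$, since the spaces in which $\Xi$ is allowed to control the data genuinely weaken as $p\downarrow 0$. This is handled by choosing the shift $\theta$ so that $\|\theta\|_{L^\infty(Q)}^p$ is dominated by the norms entering $\Xi$, by using the Burkholder--Davis--Gundy inequality in the form $E[\sup_{t\le T}|M_t|^p]\le C\,E[\langle M\rangle_T^{p/2}]$, and, when $p<1$, by localising with the stopping times $\tau_R=\inf\{t\le T:\|u^+(t)\|_{L^2(\bR^d)}>R\}$ and passing to the monotone limit; the remaining work is the bookkeeping that identifies $\Lambda$, and hence $\Xi$, precisely in terms of the structure coefficients of \eqref{SPDE} and the data.
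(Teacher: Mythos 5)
Your setup — truncations $v_k=(u-k)^+$, the It\^o energy estimate with the cancellation between the quadratic variation of the noise and the terms $\tfrac12 M_k^2u$, $M_k'h^k$, the subelliptic upgrade from $\sum_k\|L_kv_k\|^2$ to $\|v_k\|_\eta^2$ via Lemma \ref{lem-lie-bracket} under $(\mathcal H)$, and the Gagliardo--Nirenberg/Ladyzhenskaya interpolation to an $L^\rho(Q)$ bound with $\rho>2$ — matches the paper's Section 4. The gap is in the iteration itself. You take expectations in the Caccioppoli inequality ($\mathcal E_k:=E[\cdots]\le C\mathcal D_k$) and then assert that ``the iteration is the classical one.'' It is not: the superlinear recursion of De Giorgi--Stampacchia type requires, at level $z$, a bound of the form $Y_z\le C^zY_{z-1}^{1+\alpha_0}$ for a \emph{single} scalar sequence, but after taking expectations you only obtain $EU_z\le C^z\lambda^{-2\alpha_0}E\bigl[U_{z-1}^{1+\alpha_0}\bigr]+E\sup_tM_z(t)$, and $E[U_{z-1}^{1+\alpha_0}]\ge(EU_{z-1})^{1+\alpha_0}$ by Jensen, so the recursion does not close in terms of $EU_z$; BDG applied to $M_z$ only reproduces the same higher moment of $U_{z-1}$. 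Your claimed output — an \emph{almost sure} bound $\|u^+\|_{L^\infty(Q)}\le\Lambda$ with $\Lambda$ a deterministic combination of data norms — is moreover stronger than the theorem and false in general: the final estimate necessarily involves the random quantity $U_0$ only through its $L^{p/\theta_0}(\Omega)$ moments (the term $\Lambda^{\mp}_{p/\theta_0}$), precisely because the martingale fluctuations are not uniformly bounded in $\omega$.

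The paper's resolution, which is the genuinely stochastic ingredient you are missing, is to keep the iteration inequality \emph{pathwise}: $U_z\le N^z\lambda^{-2\alpha_0}(U_{z-1})^{1+\alpha_0}+N\sup_{t}M_z(t)$ a.s.\ (Lemma \ref{lem-iteration}), to bound the quadratic variation by $\langle M_z\rangle_T\le N^z\lambda^{-4\alpha_0}(U_{z-1})^{2+2\alpha_0}$, and then to control $\sup_tM_z(t)$ not in expectation but by the conditional exponential tail estimate
\begin{align*}
\bP\Bigl(\sup_{t\le T}M_z(t)\ge\kappa\zeta,\ (U_{z-1})^{1+\alpha_0}\le\zeta\Bigr)\le\exp\Bigl\{-\tfrac{\kappa^2\lambda^{4\alpha_0}}{2N^z}\Bigr\},
\end{align*}
obtained from the Dambis--Dubins--Schwarz time change and the reflection principle (Lemma \ref{lem-est-prob}). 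The recursion is then run on the \emph{events} $A_z=\{U_z\le\lambda^{2\theta_0}\nu^{-z}\}$, yielding the tail bound $\bP(\|u^+\|_{L^\infty(Q)}>\lambda,\ U_0\le\lambda^{2\theta_0})\le2e^{-\lambda^{2\alpha_0}}$ (Proposition \ref{lem-uplus}); the $L^p$ estimate for every $p>0$ follows by integrating this tail in $\lambda$ together with a separate bound on $E(U_0)^{p/(2\theta_0)}$. Without this conditional, probabilistic version of the recursion your scheme cannot produce the claimed estimate, so the proof as written does not go through.
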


The novelty of our result is that it does not require the super-parabolic condition (\textbf{SP}), which, to the best of our knowledge, is always assumed in the existing literature on such kind of maximum principles for SPDEs. 

For the super-parabolic SPDEs, Krylov \cite{Kryl96} established the $L^p$-theory ($p\geq 2$), from which one can derive from the classical Sobolev embedding theorem the $L^p$ estimates of time-space uniform norm for the \textit{strong} solutions that require stronger smoothness assumptions on the coefficients. For the weak solutions of super-parabolic SPDEs in bounded domains, the maximum principle was obtained by Denis, Matoussi and Stoica \cite{DenisMatoussiStoica2005} and further by \cite{D-bounddeness-SPDE-2013,DenisMatoussi2009}, but with $p\in [2,\infty)$. Their method relied on Moser's iteration. Such method was also used by Denis, Matoussi and Zhang \cite{denis-2014-MP} to derive the maximum principle for weak solutions of super-parabolic SPDEs with obstacle. In comparison, we adopt a stochastic version of De Giorgi iteration scheme in this paper. We would also note that our method is inspired by the other two different versions of De Giorgi iteration used by Hsu, Wang and Wang \cite{hsu-2013-Stoch-DeGiorgi} to investigate the regularity of \textit{strong} solutions for \textit{super-parabolic} SPDEs and by Qiu and Tang \cite{QiuTangMPBSPDE11} to study the maximum principles of weak solutions for quasilinear \textit{backward} SPDEs. For some more recent works on supremum bounds for solutions of SPDEs with iteration methods, we refer to \cite{dareiotis2017local,dareiotis2017supremum,G-2019,wang2018probabilistic}; especially, \cite{dareiotis2017supremum} deals with some classes of degenerate nonlinear SPDEs under \textit{strong} uniform boundedness assumptions on the external force terms.

The remainder of this paper is organized as follows. In Section 2, we set some notations and state our main result. The $L^2$-theory and the H\"ormander-type theorem are addressed in Section 3. Finally, we prove the maximum principle in section 4.

\section{Preliminaries and the main results}


Let $L^2(\bR^d)$ ($L^2$ for short) be the usual Lebesgue integrable space with usual scalar product $\langle\cdot,\,\cdot\rangle$ and norm $\|\cdot\|$.
For  $n\in
(-\infty,\infty)$, we denote by $H ^{n}$ the space of Bessel
potentials, that is
$H ^{n}:=(1-\Delta)^{-\frac{n}{2}} L^{2}$
with the norm
$$\|\phi\|_{n}:=\|(1-\Delta)^{\frac{n}{2}}\phi\|, ~~ \phi\in
H^{n}.$$  
For each $l\in \mathbb{N}^+$ and domain $\Pi\subset \bR^l$, denote by $C_c^{\infty}(\Pi)$ the space of infinitely differentiable functions with compact supports in $\Pi$. For convenience, we shall use $\langle \cdot,\,\cdot\rangle$ to denote the duality between $(H^n)^k$ and $(H^{-n})^k$ ($k\in\bN^+,\,n\in\bR$) as well as that between the Schwartz function space $\mathscr{D}$ and  $C_c^{\infty}(\bR^d)$. Moreover, we always omit the superscript associated to the dimension when there is no confusion.

For  Banach space ($\mathbb{B}$, $\|\cdot\|_{\mathbb{B}}$) and $p\in[1,\infty]$, $\cS ^p (\mathbb{B})$ is the set of all the $\mathbb{B}$-valued,
 $(\sF_t)$-adapted and continuous processes $(X_{t})_{t\in [0,T]}$ such
 that
 $$\|X\|_{\cS ^p(\mathbb{B})}:= \left\| \sup_{t\in [0,T]} \|X_t\|_{\mathbb{B}} \right\|_{L^p(\Omega)}< \infty.$$
  Denote by $\mathcal{L}^p(\mathbb{B})$ the totality of all the $\mathbb B$-valued,
 $(\sF_t)$-adapted processes $(X_{t})_{t\in [0,T]}$ such
 that
 $$
 \|X\|_{\mathcal{L}^p(\mathbb{B})}:= \left\| \|X_t\|_{\mathbb{B}} \right\|_{L^p(\Omega\times[0,T])}< \infty.
 $$
Obviously, both $(\cS^p(\mathbb{B}),\,\|\cdot\|_{\cS^p(\mathbb{B})})$ and  $(\mathcal{L}^p(\mathbb{B}),\|\cdot\|_{\mathcal{L}^p(\mathbb{B})})$ 
 are Banach spaces. In addition, for $p\in(0,1)$, we denote by $L^p(\Omega;\mathbb B)$ the $\mathbb B$-valued $\sF$-measurable functions $f$ such that $\|f\|_{\mathbb B}^p\in L^1(\Omega;\bR)$ with 
 $\|f\|_{L^p(\Omega;\mathbb B)}:=\left\|\|f\|_{\mathbb B}^p\right\|_{L^1(\Omega;\bR)}^{1/p}$.

%
By $C_b^{\infty}$, we denote the set of infinitely differentiable functions with bounded derivatives of any order.  Denote by $\cL^{\infty}(C_b^{\infty})$ the set of functions $h$ on $\Omega\times [0,T] \times \bR^d$ such that $h(t,x)$ is infinitely differentiable with respect to $x$ and all the derivatives of any order belong to $ \cL^{\infty}(L^\infty(\bR^d))$.

Throughout this paper, we denote $I^n=(1-\Delta)^{\frac{n}{2}}$ for $n\in\bR$. Then $I^n$ belongs to $\Psi_n$ that is the class of pseudo-differential operators of order $n$. By the pseudo-differential operator theory (see \cite{Hormander1983analysis} for instance), the $m$-th order differential operator belongs to $\Psi_m$ for $m\in\bN^+$, the multiplication by elements of $C_b^{\infty}$ lies in $\Psi_0$, and for the reader's convenience, two basic results are collected below.
\begin{lem}\label{lem-pdo}
(i). If $J_1\in\Psi_{n_1}$ and $J_2\in\Psi_{n_2}$ with $n_1,n_2\in\bR$, then $J_1J_2\in\Psi_{n_1+n_2}$ and the Lie bracket $[J_1,J_2]:=J_1J_2-J_2J_1\in\Psi_{n_1+n_2-1}$.

(ii). For $m\in (0,\infty)$, let $\zeta$ belong to $C_b^{m}$ which is defined as usual. Then for any $n\in(-m,m)$ there exists constant $C$ such that 
$$
\|\zeta\phi\|_n\leq C \|\zeta\|_{C^{m}}\|\phi\|_n,\quad \forall\,\phi\in H^n.
$$ 
\end{lem}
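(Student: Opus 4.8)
The lemma is purely a statement of the symbol calculus of pseudodifferential operators, so my plan is to reduce part (i) to the standard composition theorem and part (ii) to elementary multiplier estimates, in both cases tracking orders and the dependence on $\|\zeta\|_{C^m}$. For part (i), I would write $J_i=\mathrm{Op}(a_i)$ with symbols $a_i\in S^{n_i}$ for $i=1,2$ and invoke the composition theorem (as in \cite{Hormander1983analysis}): $J_1J_2=\mathrm{Op}(c)$ with $c\in S^{n_1+n_2}$ admitting the asymptotic expansion $c(x,\xi)\sim\sum_{\alpha}\frac{1}{\alpha!}\,\ptl_\xi^\alpha a_1(x,\xi)\,D_x^\alpha a_2(x,\xi)$, whose leading ($|\alpha|=0$) term is $a_1a_2\in S^{n_1+n_2}$. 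This gives $J_1J_2\in\Psi_{n_1+n_2}$ at once. For the bracket, the point is that the principal symbols $a_1a_2$ and $a_2a_1$ are scalar and commute pointwise, so the principal symbol of the difference $J_1J_2-J_2J_1$ vanishes; the surviving leading contribution comes from $|\alpha|=1$ and equals $\frac{1}{i}\{a_1,a_2\}\in S^{n_1+n_2-1}$, whence $[J_1,J_2]\in\Psi_{n_1+n_2-1}$.

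For part (ii) I would run a three-step bootstrap in the order $n$. First, for an integer $n=N$ with $0\le N\le m$, I use the norm equivalence $\|\phi\|_N^2\asymp\sum_{|\alpha|\le N}\|D^\alpha\phi\|^2$ together with the Leibniz rule $D^\alpha(\zeta\phi)=\sum_{\beta\le\alpha}\binom{\alpha}{\beta}D^\beta\zeta\,D^{\alpha-\beta}\phi$; since $\|D^\beta\zeta\|_{L^\infty}\le\|\zeta\|_{C^N}\le\|\zeta\|_{C^m}$ for $|\beta|\le N$, this yields $\|\zeta\phi\|_N\le C\|\zeta\|_{C^m}\|\phi\|_N$ with $C$ depending only on $N$ and $d$, and in particular linear in $\|\zeta\|_{C^m}$. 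Second, I treat negative integer orders by duality: the $L^2$-adjoint of $\phi\mapsto\zeta\phi$ is $\psi\mapsto\bar\zeta\psi$, so using the $H^n$--$H^{-n}$ pairing, $\|\zeta\phi\|_n=\sup_{\|\psi\|_{-n}\le1}|\langle\phi,\bar\zeta\psi\rangle|\le\|\phi\|_n\,\sup_{\|\psi\|_{-n}\le1}\|\bar\zeta\psi\|_{-n}$, and the first step applied with order $-n\in[0,m]$ gives the bound for $-m\le n\le0$. Third, the remaining non-integer $n$ are handled by complex interpolation on the scale $\{H^n\}$: multiplication by $\zeta$ is bounded with norm $\le C\|\zeta\|_{C^m}$ on two integer endpoints straddling $n$, and interpolation preserves the linear dependence because $(C\|\zeta\|_{C^m})^{1-\theta}(C\|\zeta\|_{C^m})^\theta=C\|\zeta\|_{C^m}$.

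The main obstacle is the corner of the range where both $m$ and $n$ are non-integer and $n$ lies strictly above the largest integer $\le m$ (and, symmetrically, just above $-m$): there no pair of integer endpoints inside $[-m,m]$ straddles $n$, so the elementary Leibniz-plus-interpolation scheme alone does not reach the full open interval $(-m,m)$. For this fractional endpoint regime I would invoke the Bony paraproduct decomposition $\zeta\phi=T_\zeta\phi+T_\phi\zeta+R(\zeta,\phi)$ and the standard paradifferential estimates, which show that multiplication by an element of $C_b^m$ maps $H^n$ continuously into itself for every $|n|<m$, with operator norm controlled by $\|\zeta\|_{C^m}$. This sharp statement is precisely what underlies the openness of the interval $(-m,m)$, and combining it with the integer, duality, and interpolation cases above completes the proof.
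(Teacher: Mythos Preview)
The paper does not actually prove this lemma: it is stated immediately after the sentence ``for the reader's convenience, two basic results are collected below'' with an ambient reference to \cite{Hormander1983analysis}, and no argument is given. So there is no ``paper's own proof'' to compare your proposal against.

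Your argument is sound and supplies the standard reasoning the paper simply cites. Part (i) is exactly the composition calculus, and your observation that the scalar principal symbols commute so that the $|\alpha|=0$ terms cancel in $[J_1,J_2]$ is the correct mechanism. For part (ii), the Leibniz/duality/interpolation scheme covers all $n$ in the closed interval between the two integers nearest to $\pm m$, and you correctly identified the residual fractional strip near $\pm m$ as the only place needing an additional tool; invoking the paraproduct multiplier theorem there is the right fix and preserves the linear dependence on $\|\zeta\|_{C^m}$. One minor remark: in the duality step you could note explicitly that $\|\bar\zeta\|_{C^m}=\|\zeta\|_{C^m}$, but this is trivial. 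Overall your write-up is more than the paper itself provides.
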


We introduce the definition for solution of SPDE \eqref{SPDE}.
\begin{defn}\label{defn-solution}
  A process $u$
  is called a solution to SPDE \eqref{SPDE}
  if $u\in \cS^2(H^m)$ for some $m\in\bR$ and SPDE   \eqref{SPDE} holds in the distributional sense, i.e.,
   for any $\zeta\in C^{\infty}_c(\bR)\otimes C_c^{\infty}(\bR^d)$ there holds almost surely
  \begin{equation*}
    \begin{split}
      &\langle \zeta(t),\,u(t)\rangle
      -\!\int_0^t\!\!\langle \partial_s \zeta(s),\, u(s) \rangle \, ds
      -\!\int_0^t \!\!\langle \zeta(s),\, (M_ku+\beta^ku+h^k)(s)\rangle  \,dW_s^k
      \\
      &=\langle \zeta(0),\, \underline u_0\rangle +
      \int_0^t\!\! \bigg\langle  \zeta,\,
      \frac{1}{2}(L_k^2+M_k^2)u
         +{b}^jD_ju+cu+f+L_k'g^k+M_k'h^k
          \bigg\rangle(s)\, ds , \quad \forall \ t\in[0,T].
    \end{split}
  \end{equation*}
  In particular, if $u\in \cS^2(L^2)$, it is said to be a weak solution.
\end{defn}

Set 
$$\bV_0=\{L_1,\dots,L_{d_1}\} \quad \text{and} \quad \bV_{n+1}=\bV_n\cup\{[L_k,V]:\,V\in\bV_n,\,k=1,\dots,d_1\}.$$
Denote by $\bL_n$ the set of linear combinations of elements of $\bV_n$ with coefficients of $\cL^{\infty}(C^{\infty}_b)$. We introduce the following H\"ormander-type condition.

\medskip
$({\mathcal H} )$ \it
   There exists $n_0\in\bN_0$ such that $\{D_i:i=1,\dots,d\}\subset \bL_{n_0}$. (Throughout this paper, $n_0$ is always chosen to be the smallest one.) \rm
   \medskip

\begin{rmk}
It is obvious that the super-parabolicity (\textbf{SP}) corresponds to the trivial case $n_0=0$. A nontrivial example is the $2$-dimensional case with $d_1=d=2$: $L_1=D_1$ and $L_2=\cos \left((1+\alpha_t)x_1\right) D_2$ where $(\alpha_t)_{t\geq 0}$ can be any nonnegative bounded $\sF_t$-adapted process. Then one has $D_2\notin \bL_0$, but $\{D_1,D_2\}\subset \bL_1$ since $[L_1,L_2]=-(1+\alpha_t)\sin \left((1+\alpha_t)x_1\right) D_2$. Hence, we have $n_0=1$.
\end{rmk}

We also make the following assumptions.
\medskip\\
   $({\mathcal A} 1)$ \it 
   $\sigma^{ik},\theta^{ik},b^i,\beta, c\in\cL^{\infty}(C_b^{\infty})$, for $i=1,\dots,d$, $k=1,\dots,d_1$;
   \medskip\\
   $({\mathcal A} 2)$ \it    $c\geq 0$,   $\underline{u}_0\in L^{\infty}(\Omega\times\bR^d)\cap \cap_{q>0} L^q(\Omega,\sF_0;L^2)$,  $f,g^k,h^k\in \cL^2(L^2)\cap \cap_{q>0} L^q(\Omega;L^2(Q))$, for $k=1,\dots,d_1$, and moreover, for some $ \bar p >d+2\eta$
   $$(f,g,h)\in L^{\infty}(\Omega;L^{\frac{\bar p (d+2\eta)}{(\bar p + d + 2\eta)\eta}}(Q))\times 
   L^{\infty}(\Omega;L^{\frac{\bar p }{\eta}}(Q)) \times
    \left(
    L^{\infty}(\Omega;L^{\frac{2\bar p (d+2\eta)}{(\bar p + d + 2\eta)\eta}}(Q))  
    \cap L^{\infty}(\Omega;L^{\frac{\bar p }{\eta}}(Q))\right), $$
where and in the following,  we set $\eta=2^{-n_0}$. Throughout this paper, we denote
\begin{align*}
\Lambda_{\bar p,\infty}^{\mp}&= \|\underline{u}_0^{\mp}\|_{L^{\infty}(\Omega\times\bR^d)} +\esssup_{\omega\in\Omega}\|f^{\mp}(\omega,\cdot,\cdot)\|_{L^{\frac{\bar p (d+2\eta)}{(\bar p + d + 2\eta)\eta}}(Q)}
+\esssup_{\omega\in\Omega}\|(g,h)(\omega,\cdot,\cdot)\|_{L^{\frac{\bar p }{\eta}}(Q)}\\
&\quad
+\esssup_{\omega\in\Omega}\|h(\omega,\cdot,\cdot)\|_{L^{\frac{2\bar p (d+2\eta)}{(\bar p + d + 2\eta)\eta}}(Q)}
,\\
\Lambda_p^{\mp}&=\|\underline{u}_0^{\mp}\|_{L^p(\Omega;L^2)} + \|(f^{\mp},g,h)\|_{L^p(\Omega;L^2(Q))} ,\quad p\in(0,\infty).
\end{align*}
\rm

We now state our main results. 
\begin{thm}\label{thm-main}
Let assumption $(\cA 1)$ hold. Given $f\in \cL^2(H^m)$, $g,h\in \cL^2((H^m)^{d_1})$ and $\underline{u}_0\in L^2(\Omega,\sF_0;H^m)$ with some $m\in\bR$, the following three assertions hold:

(i) SPDE \eqref{SPDE} admits a unique solution $u\in\cS^2(H^m)$ with $L_ku\in\cL^2(H^m)$, $k=1,\dots,d_1$, and
\begin{align*}
&E\sup_{t\in[0,T]}\|u(t)\|_m^2+\sum_{k=1}^{d_1}E\int_{0}^T\|L_ku(t)\|_m^2\,dt
\nonumber\\
&\leq C\left\{
E\|\underline{u}_0\|_m^2 + E\int_{0}^T \left(\|f(s)\|_m^2+\|g(s)\|_m^2+\|h(s)\|_m^2\right)\,ds\right\},
\end{align*}
with the constant $C$ depending on $T,m,\theta,\sigma,b,c$ and $\beta$. In particular, if condition $(\cH)$ holds, we have further
\begin{align*}
E\int_{0}^T\|u(t)\|_{m+\eta}^2\,dt
\leq C\left\{
E\|\underline{u}_0\|_m^2 + E\int_{0}^T \left(\|f(s)\|_m^2+\|g(s)\|_m^2+\|h(s)\|_m^2\right)\,ds\right\},
\end{align*}
with $C$ depending on $T,m,n_0,\theta,\sigma,b,c$ and $\beta$.

(ii) Assume further $({\mathcal H} )$ and $f\in\cap_{n\in\bR}\cL^2(H^n)$, $g,h\in\cap_{n\in\bR}\cL^2((H^n)^{d_1})$. For any $\eps\in(0,T)$, one has $u\in\cap_{n\in\bR} L^2(\Omega;C([\eps,T];H^n))$, and for each $n\in\bR$,
\begin{align}
&E\sup_{t\in[\eps,T]}\| u(t)\|_{n}^2
+E\int_{\eps}^{T} \| u(t)\|_{n+\eta}^2\,dt
\nonumber\\
&\leq C\left\{E\|\underline{u}_0\|_m^2+
E\int_{0}^{T} \left(\|f(s)\|_{n}^2+ \|g(s)\|_{n}^2+ \|h(s)\|_{n}^2\right) ds\right\},\label{est-hmder}
\end{align}
with the constant $C$ depending on $\eps,n,T,m,n_0,\sigma,\theta,\gamma,b$ and $c$. In particular, the random field $u(t,x)$ is almost surely infinitely differentiable with respect to $x$ on $(0,T]\times\bR^d$ and each derivative is a continuous function on $(0,T]\times\bR^d$.

(iii) Let assumption $(\cA 2)$ and condition $(\cH)$ hold. For the weak solution $u$ of SPDE \eqref{SPDE}, there exists $\theta_0\in(0,1]$ such that for any $p>0$,
\begin{align*}
E\|u^{\mp}\|^p_{L^{\infty}(Q)} \leq C \left(\Lambda^{\mp}_{\bar p,\infty} + \Lambda^{\mp}_{\frac{p}{\theta_0}}  \right)^p,
\end{align*}
with the constant $C$ depending on $d,p,n_0, T$ and the quantities related to the coefficients $\sigma,\theta,b,c$ and $\beta$.
\end{thm}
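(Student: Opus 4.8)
\emph{Parts (i) and (ii): an $L^2$-theory by vanishing viscosity plus a subelliptic estimate.} The plan is to approximate \eqref{SPDE} by the \textit{super-parabolic} SPDE obtained by enlarging $\sigma$ to $(\sigma,\sqrt{\eps}\,I_d)$ (adjoining $d$ noise-free directions $\sqrt{\eps}\,D_i$, which replaces $\tfrac12\sum_k L_k^2$ by $\tfrac12\sum_k L_k^2+\tfrac{\eps}{2}\Delta$ and leaves the noise term unchanged). For fixed $\eps>0$ the classical $L^2$-theory for super-parabolic SPDEs (Krylov--Rozovskii, Krylov) gives a unique $u^\eps\in\cS^2(H^m)\cap\cL^2(H^{m+1})$. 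To get bounds uniform in $\eps$ I would apply It\^o's formula to $\|I^m u^\eps(t)\|^2$: the $\tfrac12\la I^mL_k^2u,I^mu\ra$ terms, after integration by parts and using $L_k^\ast=-L_k'$, yield the dissipation $-\tfrac12\|I^mL_ku\|^2$ up to commutators $[I^m,L_k],[I^m,L_k']\in\Psi_m$ (Lemma \ref{lem-pdo}) absorbed by Cauchy--Schwarz with a small constant; the $\tfrac12 M_k^2$ drift cancels against the It\^o correction of the noise $M_ku+\beta^ku+h^k$ up to lower-order terms; the remaining drift and data terms are treated by Young and Gr\"onwall, and the martingale supremum by Burkholder--Davis--Gundy. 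This gives the first displayed estimate of (i) with $L_ku^\eps\in\cL^2(H^m)$ uniformly in $\eps$; weak compactness and identification of the limit give existence, and the same estimate applied to the difference of two solutions gives uniqueness. For the refined bound under $(\cH)$ I would establish the subelliptic estimate
$$ \|w\|_{m+\eta}^2\ \le\ C\Big(\textstyle\sum_{k=1}^{d_1}\|L_kw\|_m^2+\|w\|_m^2\Big),\qquad \eta=2^{-n_0}, $$
at a.e.\ fixed $(\omega,t)$: by Lemma \ref{lem-lie-bracket} and Lemma \ref{lem-pdo} one shows, by induction on bracket depth, that $\|Vw\|$ for $V\in\bV_n$ is controlled in the appropriate negative-order space with the Sobolev exponent improving by a factor $2^{-n}$ at depth $n$, and $(\cH)$ writes each $D_i$ as an $\cL^\infty(C_b^\infty)$-combination of elements of $\bV_{n_0}$; applying this to $w=u^\eps(t)$ and integrating in $t$ closes (i). For (ii), multiply the equation by a time cutoff $\chi\in C^\infty(\bR)$ vanishing near $0$ and equal to $1$ on $[\eps,T]$: then $\chi u$ solves an SPDE with zero initial value and an extra source $\chi'u\in\cL^2(H^{m+\eta})$, so part (i) gains another $\eta$; iterating reaches every $H^n$, and the time-continuity of (i) with Sobolev embedding gives the a.s.\ smoothness in $x$ on $(0,T]\times\bR^d$.

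\emph{Part (iii), setup.} By replacing $u$ with $-u$ it suffices to bound $u^-$. First reduce to the case of smooth data (approximate the data, invoke (ii), and pass to the limit using uniqueness and continuity of the weak solution together with the fact that the final bound involves only the $(\cA 2)$-norms); It\^o's formula for functionals of $u$ is justified by spatial mollification and letting the mollification parameter tend to $0$. For $\ell\ge0$ put $w_\ell:=(u+\ell)^-$ and $A_\ell:=\{w_\ell>0\}=\{u<-\ell\}$, and apply It\^o's formula to $\|w_\ell(t)\|^2$ using $\phi_\ell(r)=((r+\ell)^-)^2$, $\phi_\ell''=2\mathbf{1}_{\{r<-\ell\}}$: the $\tfrac12 L_k^2u$ drift gives the dissipation $-\|L_kw_\ell\|^2$; the div-form term $L_k'g^k$ integrates by parts to pair with $L_kw_\ell$ and is absorbed into this dissipation with a small constant; the $\tfrac12 M_k^2u$ drift cancels against the It\^o correction of $M_ku+\beta^ku+h^k$ up to terms bounded by $\eps'\|L_kw_\ell\|^2+C\int_{A_\ell}(|h|^2+w_\ell^2)$; the term $f$ contributes at most $2\int_{A_\ell}f^-w_\ell$; and $b^jD_ju$, $cu$ and the $\beta^ku$ pieces give $O(\|w_\ell\|^2)$. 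Combining with the subelliptic estimate above, Gr\"onwall, and Burkholder--Davis--Gundy for the martingale supremum gives the energy inequality
$$ \mathcal{E}(w_\ell):=E\sup_t\|w_\ell(t)\|^2+E\!\int_0^T\!\|w_\ell(t)\|_\eta^2\,dt\ \le\ C\,E\|(\underline u_0+\ell)^-\|^2+C\,E\!\int_{Q}\mathbf{1}_{A_\ell}\big(f^-w_\ell+|g|^2+|h|^2\big), $$
whose first term vanishes once $\ell\ge\|\underline u_0^-\|_{L^\infty(\Omega\times\bR^d)}$.

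\emph{Part (iii), the iteration.} Using the Sobolev embedding $H^\eta\hookrightarrow L^{2d/(d-2\eta)}$ together with interpolation against $L^2$, the bound on $\mathcal{E}(w_\ell)$ upgrades to control of $E\int_Q w_\ell^{2(1+2\eta/d)}$; the Lebesgue exponents in $(\cA 2)$ are chosen exactly so that, by H\"older, $\|f^-\mathbf{1}_{A_\ell}\|$, $\|g\,\mathbf{1}_{A_\ell}\|$, $\|h\,\mathbf{1}_{A_\ell}\|$ each carry a strictly positive power of $|A_\ell|$ (this uses $\bar p>d+2\eta$), while $|A_\ell|\le(\ell-\ell')^{-q}\int_Q w_{\ell'}^q$ for $\ell>\ell'$. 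Taking levels $\ell_j=\kappa(2-2^{-j})$ and a suitable norm $y_j$ of $w_{\ell_j}$, these estimates produce a De Giorgi-type recursion $y_{j+1}\le C\,b^{\,j}\,\kappa^{-\gamma}\,y_j^{1+\delta}$ with $b>1$, $\gamma>0$, $\delta>0$ depending on $d,n_0,\bar p$. Choosing the random level $\kappa=M\big(\Lambda^{-}_{\bar p,\infty}+\xi\big)$ with $M$ a large constant and $\xi$ an $\sF$-measurable quantity the energy inequality controls in $L^{p/\theta_0}(\Omega)$ by $C\Lambda^{-}_{p/\theta_0}$ — the exponent $\theta_0\in(0,1]$ being dictated by how the nonlinear pathwise recursion is converted into moment bounds — the fast geometric convergence lemma forces $y_j\to0$, hence $w_{2\kappa}\equiv0$, i.e.\ $u\ge-2\kappa$ a.e.\ on $Q$, a.s. Therefore $\|u^-\|_{L^\infty(Q)}\le2\kappa$ and $E\|u^-\|_{L^\infty(Q)}^p\le 2^pE\kappa^p\le C\big(\Lambda^{-}_{\bar p,\infty}+\Lambda^{-}_{p/\theta_0}\big)^p$; the $u^+$ estimate follows by applying the same argument to $-u$.

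\emph{Main obstacle.} For (i)--(ii) it is the sharp subelliptic estimate: carrying the commutator bookkeeping through the pseudo-differential calculus so that the gain is exactly $\eta=2^{-n_0}$ (Lemma \ref{lem-lie-bracket}). For (iii) it is twofold — closing the De Giorgi recursion with only the fractional $H^\eta$-gain rather than the customary $H^1$, which forces every source term, in particular the div-form coefficient $g$, to be routed through the $L_ku$-dissipation and the subelliptic estimate; and the stochastic bookkeeping turning the nonlinear pathwise recursion into the stated $L^p$-bound with the precise exponent $\theta_0$ and the two norms $\Lambda^{\mp}_{\bar p,\infty}$, $\Lambda^{\mp}_{p/\theta_0}$.
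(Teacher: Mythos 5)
Your treatment of parts (i) and (ii) is essentially the paper's: a vanishing-viscosity approximation (the paper adds $\delta\Delta u$ and lets $\delta\downarrow 0$, which is your $(\sigma,\sqrt{\eps}\,I_d)$ enlargement), the It\^o formula for $\|I^m u\|^2$ with commutator bookkeeping in $\Psi_m$, the exact cancellation of $\|I^mM_ku\|^2$ \emph{and} of the cross term $2\langle I^mM_ku,I^mh^k\rangle$ between the $\tfrac12M_k^2u+M_k'h^k$ drift and the It\^o correction (the paper stresses that this cross term must be kept as a unit, since there is no dissipation in the $M_k$ directions to absorb it), the iterated Lie-bracket estimate of Lemma \ref{lem-lie-bracket} for the gain $\eta=2^{-n_0}$, and for (ii) a time weight killing the initial data (the paper uses $\bar u=tu$ rather than a cutoff $\chi$) followed by bootstrapping on shrinking intervals $[\eps_j,T]$. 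These parts are fine in outline.

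Part (iii) has a genuine gap at its center. Your energy inequality is stated in expectation, and you then assert that it "produces a De Giorgi-type recursion $y_{j+1}\le Cb^j\kappa^{-\gamma}y_j^{1+\delta}$" which you call a "pathwise recursion" and propose to close by choosing a random level $\kappa$. These two things are incompatible: if the $y_j$ carry expectations, the nonlinearity does not survive taking expectations (Jensen gives $E[Y^{1+\delta}]\ge (EY)^{1+\delta}$, the wrong direction), so the recursion in expectation does not close; if the recursion is pathwise, then the stochastic integral cannot be discarded by Burkholder--Davis--Gundy and a genuinely new mechanism is needed to control it \emph{for each $\omega$} relative to the nonlinear term. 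This is exactly what the paper supplies and what your proposal only names as "the main obstacle" without resolving. The paper keeps the iteration pathwise, in the form $U_z\le N^z\lambda^{-2\alpha_0}(U_{z-1})^{1+\alpha_0}+N\sup_t M_z(t)$ a.s.\ (Lemma \ref{lem-iteration}), then shows $\langle M_z\rangle_T\le N^z\lambda^{-4\alpha_0}(U_{z-1})^{2+2\alpha_0}$ and uses the Dambis--Dubins--Schwarz time change together with the reflection principle to get the Gaussian tail
\begin{align*}
\bP\Big(\sup_{t\le T}M_z(t)\ge\kappa\zeta,\ (U_{z-1})^{1+\alpha_0}\le\zeta\Big)\le\exp\Big\{-\tfrac{\kappa^2\lambda^{4\alpha_0}}{2N^z}\Big\}
\end{align*}
(Lemma \ref{lem-est-prob}); a union bound over $z$ with geometric choices of $\kappa_z,\zeta_z$ then yields the tail estimate $\bP(\|u^+\|_{L^\infty(Q)}>\lambda,\ U_0\le\lambda^{2\theta_0})\le 2e^{-\lambda^{2\alpha_0}}$ with $\theta_0=\tfrac14$ (Proposition \ref{lem-uplus}), and the $L^p$ bound follows by integrating this tail against $\lambda^{p-1}d\lambda$ and bounding $E\,U_0^{p/(2\theta_0)}$ separately. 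Without this (or an equivalent) quantitative control of the martingale supremum against $(U_{z-1})^{1+\alpha_0}$, the De Giorgi scheme does not terminate and the claimed value of $\theta_0$ has no derivation; so as written your part (iii) is not a proof.
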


\begin{rmk}\label{rmk-mainr}
 Assertion (i) is a summary of Theorem \ref{thm-BSPDE} and Corollary \ref{cor-grad-est}, in which an $L^2$-theory is presented for the linear, possibly degenerate SPDEs. 
Assertion (ii) is from Theorem \ref{thm-hormander}, which is a H\"ormander-type theorem. The most important result of this paper is the maximum principle of assertion (iii), which corresponds to Theorem \ref{thm-Lp-inf} below and states the $L^p$ ($p>0$) estimates for the time-space uniform norm of weak solutions for possibly \textit{degenerate} SPDE \eqref{SPDE} in the whole space. 
\end{rmk}


\section{$L^2$-theory and H\"ormander-type theorem for SPDEs}
\subsection{$L^2$-theory of SPDEs}
\label{sec-l2-thy}

We consider the following SPDE
\begin{equation}\label{SPDE-D}
  \left\{\begin{array}{l}
  \begin{split}
  du(t,x)=\,&\displaystyle \left[\delta\Delta u+ \frac{1}{2}(L_k^2+M_k^2)u
         +{b}^jD_ju+cu+f+L_k'g^k+M_k'h^k
                \right](t,x)\, dt\\ &\displaystyle
           +\left[M_ku+\beta^k u+h^k\right](t,x)\, dW_{t}^{k}, \quad
                     (t,x)\in Q;\\
    u(0,x)=\, &\underline{u}_0(x), \quad x\in\bR^d,
    \end{split}
  \end{array}\right.
\end{equation}
with $\delta \in [0,\infty)$.

We first give an a priori estimate for the solution of SPDE  \eqref{SPDE-D}.
\begin{prop}\label{prop-apriori-estm}
Let assumption $(\cA 1)$ hold. Assume $\underline{u}_0\in L^2(\Omega,\sF_0;H^m)$ and $f,g^k,h^k\in \cL^2(H^m)$ with $m\in \bR$, for $k=1,\dots,d_1$.  If $u\in \cS^2(H^{m+1})\cap\cL^2(H^{m+2})$ is a solution of SPDE \eqref{SPDE-D}, one has
\begin{align}
&E\sup_{t\in[0,T]}\|u(t)\|_m^2+E\int_{0}^T\left(\delta \|Du(t)\|_m^2+\sum_{k=1}^{d_1}\|L_ku(t)\|_m^2\right)\,dt
\nonumber\\
&\leq C\left\{
E\|\underline{u}_0\|_m^2 + E\int_{0}^T \left(\|f(s)\|_m^2+\|g(s)\|_m^2+\|h(s)\|_m^2\right)\,ds\right\},\label{estim-aprioi-prop}
\end{align}
with $C$ being independent of $\delta$.
\end{prop}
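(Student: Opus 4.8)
The plan is to derive a stochastic differential inequality for $t\mapsto\|u(t)\|_m^2$ and then close it by Gronwall's lemma together with the Burkholder-Davis-Gundy (BDG) inequality. Since the coefficients lie in $\cL^{\infty}(C_b^{\infty})$ by $(\cA 1)$ and $u\in\cS^2(H^{m+1})\cap\cL^2(H^{m+2})$, the solution is regular enough that, applying $I^m=(1-\Delta)^{m/2}$ to \eqref{SPDE-D} and using the It\^o formula for the square of the norm in the Gelfand triple $H^{m+1}\hookrightarrow H^m\hookrightarrow H^{m-1}$, one obtains, with $\la\cdot,\cdot\ra_m:=\la I^m\cdot,I^m\cdot\ra$,
\begin{align*}
\|u(t)\|_m^2=\,&\|\underline u_0\|_m^2+\int_0^t\Big(2\delta\la u,\Delta u\ra_m+\la u,(L_k^2+M_k^2)u\ra_m\\
&\quad+2\la u,b^jD_ju+cu+f+L_k'g^k+M_k'h^k\ra_m+\sum_k\|M_ku+\beta^ku+h^k\|_m^2\Big)\,ds\\
&+\int_0^t2\la u,M_ku+\beta^ku+h^k\ra_m\,dW_s^k.
\end{align*}
A routine localization by $\tau_N=\inf\{t:\|u(t)\|_{m+1}\ge N\}\wedge T$ makes the last integral a genuine martingale, and $\tau_N$ is removed at the end by Fatou's lemma.

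The heart of the matter is a coercivity estimate for the $ds$-integrand, in which the second-order drift terms $\delta\Delta u$ and $\tfrac12(L_k^2+M_k^2)u$ must be treated jointly with the It\^o correction $\sum_k\|M_ku+\beta^ku+h^k\|_m^2$. One has $2\delta\la u,\Delta u\ra_m=-2\delta\|Du\|_m^2$, which already accounts (with room to spare) for the $\delta\|Du\|_m^2$ on the left. Integration by parts yields $\la u,L_k^2u\ra_m=-\|L_ku\|_m^2+(\text{lower order})$, producing the coercive $-\sum_k\|L_ku\|_m^2$, into which the coupling $2\la u,L_k'g^k\ra_m=-2\la L_ku,g^k\ra_m$ is then absorbed at the price of $C\sum_k\|g^k\|_m^2$. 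Two algebraic cancellations are essential: \emph{(i)} after one integration by parts $\la u,M_k^2u\ra_m=-\|M_ku\|_m^2+(\text{lower order})$, and this $-\|M_ku\|_m^2$ exactly cancels the $+\|M_ku\|_m^2$ occurring in the It\^o correction — which is precisely why no quantity $\|M_ku\|_m$ need appear on the left-hand side; \emph{(ii)} the coupling $2\la u,M_k'h^k\ra_m=-2\la M_ku,h^k\ra_m$ cancels the cross term $2\la M_ku,h^k\ra_m$ generated by $\|M_ku+\beta^ku+h^k\|_m^2$. Everything that remains — the terms $2\la u,b^jD_ju+cu+f\ra_m$ and $2\la M_ku,\beta^ku\ra_m$, the ``tails'' of the integrations by parts above (which involve first-order operators acting only against bounded coefficients, so one further integration by parts, moving derivatives off $I^mu$, renders them $O(\|u\|_m^2)$), and the commutator terms produced by conjugating the variable coefficients by $I^m$ (namely $[I^m,\sigma^{jk}]$, $[I^m,\theta^{jk}]$, $[I^m,\beta^k]$, of order $m-1$ by Lemma \ref{lem-pdo}(i), handled with the multiplier bound of Lemma \ref{lem-pdo}(ii) and the a priori regularity $u\in\cL^2(H^{m+2})$) — is bounded by $\tfrac14\sum_k\|L_ku\|_m^2+C\big(\|u\|_m^2+\|f\|_m^2+\|g\|_m^2+\|h\|_m^2\big)$ with constants not depending on $\delta$. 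Collecting these gives
\begin{align*}
\|u(t)\|_m^2+\int_0^t\Big(2\delta\|Du\|_m^2+\tfrac12\sum_k\|L_ku\|_m^2\Big)ds
&\le\|\underline u_0\|_m^2+C\int_0^t\big(\|u\|_m^2+\|f\|_m^2+\|g\|_m^2+\|h\|_m^2\big)ds+\cM_t,
\end{align*}
where $\cM_t=\int_0^t2\la u,M_ku+\beta^ku+h^k\ra_m\,dW_s^k$.

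To close the estimate, taking expectations removes $\cM_t$, and Gronwall's lemma applied to $E\|u(t)\|_m^2$ delivers the asserted bound for $E\int_0^T\big(\delta\|Du\|_m^2+\sum_k\|L_ku\|_m^2\big)ds$ and for $\sup_tE\|u(t)\|_m^2$. To upgrade the latter to $E\sup_t\|u(t)\|_m^2$, one takes $\sup_{t\le T}$ in the pathwise inequality before taking expectations; by BDG, $E\sup_{t\le T}|\cM_t|\le C\,E\big(\int_0^T\sum_k\la u,M_ku+\beta^ku+h^k\ra_m^2\,ds\big)^{1/2}$, and since integration by parts again gives $\la u,M_ku\ra_m=O(\|u\|_m^2)$ (so that no $\|M_ku\|_m$ enters), the integrand is $\lesssim\|u\|_m^2\big(\|u\|_m^2+\sum_k\|h^k\|_m^2\big)$, whence $E\sup_t|\cM_t|\le\tfrac12E\sup_t\|u\|_m^2+C\,E\int_0^T\big(\|u\|_m^2+\sum_k\|h^k\|_m^2\big)ds$. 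Absorbing $\tfrac12E\sup_t\|u\|_m^2$, using the Gronwall bound already obtained for $E\int_0^T\|u\|_m^2\,ds$, and passing $N\to\infty$ yields \eqref{estim-aprioi-prop}, with $C$ depending only on $T,m$ and the structure coefficients, and not on $\delta$.

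The main obstacle is exactly this $\delta$-uniformity. As $\delta$ may be arbitrarily small, and $\delta=0$ is admitted, one cannot exploit the parabolic term $\delta\Delta u$ to absorb errors, so the proof must rest on the exact cancellations \emph{(i)}--\emph{(ii)} between the drift and the It\^o correction rather than on any regularizing effect; the delicate point is to carry these through the conjugation by $I^m$. Concretely, every error term produced by the commutators $[I^m,\sigma^{jk}]$, $[I^m,\theta^{jk}]$, $[I^m,\beta^k]$ must, after appropriate integrations by parts and repeated use of Lemma \ref{lem-pdo}, be shown to be genuinely $O(\|u\|_m^2)$ — rather than merely $O(\|u\|_m\|u\|_{m+1})$, which would only be absorbable by the parabolic term — with a $\delta$-free constant; it is for this bookkeeping that the extra a priori regularity $u\in\cL^2(H^{m+2})$ is used. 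Once the coercivity inequality is established, the remaining Gronwall-and-BDG closing is standard.
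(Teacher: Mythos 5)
Your architecture coincides with the paper's: It\^o's formula for $\|I^m u\|^2$, exact cancellation between the second-order drift and the It\^o correction, then Gronwall and Burkholder--Davis--Gundy. The two cancellations you isolate as \emph{(i)}--\emph{(ii)} are indeed the crux, your treatment of the $L_k$-part (absorbing $2\la L_ku,g^k\ra_m$ and the $[I^m,L_k]$-commutators into $\eps\|L_ku\|_m^2$), of the first-order and zeroth-order terms, and of the martingale via BDG all match the paper's proof.

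There is, however, one step that does not go through as stated. Once $-\|I^mM_ku\|^2$ from the drift has been spent cancelling $+\|I^mM_ku\|^2$ in the It\^o correction, no coercive $M_k$-quantity remains on the left, yet conjugation by $I^m$ leaves behind cross terms of the form $\la [I^m,M_k]u,\,I^mM_ku\ra$ and $\la I^mu,\,[I^m,M_k']M_ku\ra$. Each of these is only $O(\|u\|_m\,\|M_ku\|_m)=O(\|u\|_m\,\|u\|_{m+1})$, and $\|M_ku\|_m$ is dominated neither by $\|u\|_m$ nor by $\sum_k\|L_ku\|_m$ (no relation between $\theta$ and $\sigma$ is assumed), so your claimed bound $\tfrac14\sum_k\|L_ku\|_m^2+C(\|u\|_m^2+\dots)$ is not available for them; and invoking $u\in\cL^2(H^{m+2})$ quantitatively would yield a constant not controlled by the right-hand side of \eqref{estim-aprioi-prop}. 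The missing idea is a symmetrization: replace $I^mM_ku$ by $M_kI^mu$ at a cost of $\|[I^m,M_k]u\|^2=O(\|u\|_m^2)$, pass $M_k$ to the other side of the pairing, and combine the two cross terms into $\la I^mu,\,[[I^m,M_k],M_k]u\ra$, where the double commutator lies in $\Psi_m$ by Lemma \ref{lem-pdo}(i) and hence gives a genuine $O(\|u\|_m^2)$; the residual transport term $\la I^mu,\,\alpha_kM_kI^mu\ra$ is then killed by the antisymmetry identity \eqref{eq-relat-adu}, and the analogous cross term $\la I^mM_ku,\,I^m(\beta^ku)\ra$ in the It\^o correction is treated the same way. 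Without this cancellation the $\delta$-uniform coercivity inequality you write down is not established; with it, the rest of your argument (Gronwall, BDG, localization, Fatou) is exactly the paper's.
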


\begin{proof}
 We have decompositions $L_k=L_k'+c_k$ and $M_k=M_k'+\alpha_k$ with $c_k=-(D_i\sigma^{ik})\cdot$ and $\alpha_k=-(D_i\theta^{ik})\cdot$, for $k=1,\dots,d_1$.
 Applying It\^o formula for the square norm (see e.g. \cite[Theorem 3.1]{Krylov_Rozovskii81}), one has almost surely for $t\in[0,T]$,
\begin{align}
&\|I^mu(t)\|^2+\int_0^t  2\delta\|I^mDu(s)\|^2\,ds
-\int_0^t \!\!\!2\langle I^m u(s),\,I^m((Du)\theta+\beta u+h)(s) \,dW_s\rangle  \nonumber
\\
&=\|I^m\underline{u}_0\|^2+
\int_0^t\left\langle
I^mu,\,
I^m\left((L_k^2+M_k^2)u+2M_k'h^k+2L_k'g^k
         \right)\right\rangle(s)\,ds
         \nonumber\\
         &\quad
         +\int_0^t2\left\langle I^mu,\,I^m\left(b^jD_ju+cu+f\right)\right\rangle(s)\,ds
         +\int_0^t\|I^m((Du)\theta+\beta u+h)(s)\|^2\,ds. \label{eq-prop-ito}
\end{align}

First, basic calculations yield
\begin{align}
&\langle I^mu,\,I^m(L_k^2u+2L_k'g^k+2cu+2f)\rangle\nonumber\\
&=\langle I^mu,\,I^m(L'_k+c_k)L_ku\rangle+2\langle I^mu,\,I^m L'_kg^k\rangle
+2\langle I^mu,\,I^m(cu+f)\rangle\nonumber\\
&=-\| I^mL_ku\|^2 +\langle [I^m,L_k]u,\,I^mL_ku\rangle+\langle I^mu,\,[I^m,L_k']L_ku+ I^mc_kL_ku\rangle
\nonumber\\
&\,\,\,
-2\langle I^mL_ku,\,I^mg^k\rangle+2\langle[I^m,L_k]u,\,I^m g^k\rangle +2\langle I^mu,\,[I^m,L_k']g^k\rangle
+2\langle I^mu,\,I^m(cu+f)\rangle\nonumber\\
&\leq-(1-\eps)\|I^mL_ku\|^2+C_{\eps}\left(\|I^mu\|^2+\|I^mg^k\|^2+\|I^mf\|^2\right),\quad\eps\in(0,1),\label{eq-rela-u}
\end{align}
and
\begin{align}
&\langle I^mu,\,I^m(M_k^2u+2M_k'h^k)\rangle\nonumber\\
&=-\| I^mM_ku\|^2 +\langle [I^m,M_k]u,\,I^mM_ku\rangle+\langle I^mu,\,[I^m,M_k']M_ku+ I^m\alpha_kM_ku\rangle
\nonumber\\
&\,\,\,
-2\langle I^mM_ku,\,I^mh^k\rangle+2\langle[I^m,M_k]u,\,I^m h^k\rangle +2\langle I^mu,\,[I^m,M_k']h^k\rangle
\nonumber\\
&\leq
-\| I^mM_ku\|^2 -2\langle I^mM_ku,\,I^mh^k\rangle 
+\langle [I^m,M_k]u,\,M_kI^mu\rangle+\langle I^mu,\,[I^m,M_k]M_ku+ \alpha_kM_kI^mu\rangle
\nonumber\\
&\,\,\,
+C \left(\|I^mu\|^2+\|I^mh^k\|^2\right)\nonumber\\
&\leq
-\| I^mM_ku\|^2 -2\langle I^mM_ku,\,I^mh^k\rangle 
+\langle I^mu,\,[[I^m,M_k],M_k]u+ \alpha_kM_kI^mu\rangle
\nonumber\\
&\,\,\,
+C \left(\|I^mu\|^2+\|I^mh^k\|^2\right)\nonumber\\
&\leq
-\| I^mM_ku\|^2 -2\langle I^mM_ku,\,I^mh^k\rangle 
+
C \left(\|I^mu\|^2+\|I^mh^k\|^2\right),\label{eq-relat-uxi}
\end{align}
where we have used the relation 
\begin{align}
\langle I^mu,\,\alpha_kM_kI^mu\rangle=-\frac{1}{2}\langle I^mu,\,D_i(\alpha_k\theta^{ik})I^mu\rangle.\label{eq-relat-adu}
\end{align}

Notice that for $i=1,\dots,d$, $k=1,\dots,d_1$,
\begin{align}
\|I^m(h^k+\beta^k u+M_ku)\|^2
&=\|I^mh^k\|^2+2\langle I^mh^k,\,I^mM_ku\rangle+\|I^mM_ku\|^2\nonumber\\
&\,\,\,\,+2\langle I^m(h^k+M_ku), I^m(\beta^k u)\rangle + \|I^m(\beta^k u)\|^2,\label{esti-diff-prop3-1}\\
\langle I^mu,\,I^m(b^iD_iu)\rangle 	&=-\frac{1}{2}
\langle I^mu,\,D_ib^iI^mu
+2 [b^iD_i,\,I^m]u\rangle,\nonumber\\
\langle I^mM_ku,\,I^m(\beta^ku) \rangle
&\leq
\langle M_kI^mu,\,\beta^kI^mu\rangle+C\|I^m u\|^2\nonumber\\
&=
-\frac{1}{2} \langle I^mu, \,D_i(\beta^k\theta^{ik})I^mu\rangle
+C\|I^m u\|^2.\nonumber
\end{align} 
 Putting \eqref{eq-prop-ito}, \eqref{eq-rela-u} and \eqref{eq-relat-uxi} together, and taking expectations on both sides of \eqref{eq-prop-ito}, one gets by Gronwall inequality
\begin{align}
&\sup_{t\in[0,T]}E\|u(t)\|_m^2+E\int_{0}^T\left(\delta\|Du(t)\|_m^2+\sum_{k=1}^{d_1}\|L_ku(t)\|_m^2\right)dt
\nonumber\\
&\leq C\left\{
E\|\underline{u}_0\|_m^2 + E\int_{0}^T \left(\|f(s)\|_m^2+\|g(s)\|_m^2+\|h(s)\|_m^2\right)\,ds\right\}. \label{eq-prf-0}
\end{align}
On the other hand, one has for each $t\in[0,T)$,
\begin{align*}
&E\sup_{\tau\in[0,t]}\bigg|\int_0^{\tau} \!\!\!2\left\langle I^m u(s),\,I^m(h+\beta u+(Du)\theta)(s)\,dW_s\right\rangle\bigg|\\
&\leq C \bigg(E\sum_{k=1}^{d_1}\int_0^t\left(|\langle I^mu(s),\,I^m(h^k+\beta^ku)(s)\rangle|^2 +|\langle I^mu(s),\,(M_kI^m+[I^m,M_k])u(s)\rangle|^2\right)ds\bigg)^{1/2}\\
&\leq C \bigg(E\int_0^t
\left(\|I^mu(s)\|^2\|I^mh(s)\|^2+\|I^mu(s)\|^4\right)ds
\bigg)^{1/2	}
\\
&\leq \eps E\sup_{s\in[0,t]}\|I^mu(s)\|^2 +C_{\eps} E\int_0^t
\left(\|I^mh(s)\|^2+\|I^mu(s)\|^2\right)ds,\quad \eps\in(0,1).
\end{align*}
Together with \eqref{eq-prop-ito}, \eqref{eq-rela-u}, \eqref{eq-relat-uxi} and \eqref{eq-prf-0}, the above estimate implies \eqref{estim-aprioi-prop}.

\end{proof}

\begin{rmk}
The estimate \eqref{estim-aprioi-prop} plays an important role in our $L^2$-theory for SPDEs, for which some unusual techniques are applied in the calculations of \eqref{eq-rela-u}, \eqref{eq-relat-uxi} and \eqref{esti-diff-prop3-1}. Especially, we treat the term $2\langle I^mM_ku,\,I^mh^k\rangle$ as a unity and it allows us to weaken the assumptions on $h$ in the $L^2$-theory.
\end{rmk}
An immediate consequence of Proposition \ref{prop-apriori-estm} is the following 
\begin{cor}\label{cor-uniqn}
Let assumption $(\cA 1)$ hold. Given $\underline{u}_0\in L^2(\Omega,\sF_0;H^m)$ and $f,g^k,h^k\in \cL^2(H^m)$ with $m\in \bR$, for $k=1,\dots,d_1$, the solution of SPDE \eqref{SPDE-D} is unique.
\end{cor}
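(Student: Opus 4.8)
The argument reduces uniqueness to the homogeneous equation and then invokes the a priori estimate of Proposition~\ref{prop-apriori-estm}. Let $u_1,u_2$ be two solutions of SPDE~\eqref{SPDE-D} in the sense of Definition~\ref{defn-solution}; by definition $u_i\in\cS^2(H^{m_i})$ for some $m_i\in\bR$, $i=1,2$, and since $H^{m_i}$ embeds continuously into $H^{\min(m_1,m_2)}$, we may and do assume $m_1=m_2=:m$. Because SPDE~\eqref{SPDE-D} and its distributional formulation are linear in the pair $\big(u,(\underline{u}_0,f,g,h)\big)$, the difference $v:=u_1-u_2\in\cS^2(H^m)$ is a solution of SPDE~\eqref{SPDE-D} associated with the vanishing data $\underline{u}_0=0$ and $f=g^k=h^k=0$, $k=1,\dots,d_1$.

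Next I would feed $v$ into Proposition~\ref{prop-apriori-estm}, applied with the index $m$ there replaced by $m-2$. The required regularity is automatic: $v\in\cS^2(H^m)\subset\cS^2(H^{(m-2)+1})$ by the embedding $H^m\hookrightarrow H^{m-1}$, and $E\int_0^T\|v(t)\|_{m}^2\,dt\leq T\,E\sup_{t\in[0,T]}\|v(t)\|_{m}^2<\infty$ gives $v\in\cL^2(H^{(m-2)+2})$. With all the data equal to zero, estimate~\eqref{estim-aprioi-prop} then forces $E\sup_{t\in[0,T]}\|v(t)\|_{m-2}^2=0$, hence $v\equiv0$, i.e.\ $u_1=u_2$.

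The only point deserving attention is the mismatch between the space $\cS^2(H^m)$ in which a solution is merely required to lie and the stronger regularity $\cS^2(H^{m+1})\cap\cL^2(H^{m+2})$ under which Proposition~\ref{prop-apriori-estm} is stated; this is circumvented at no cost by applying the estimate at the lowered index $m-2$, since the right-hand side of~\eqref{estim-aprioi-prop} vanishes identically. Accordingly I anticipate no genuine obstacle here: the statement is a true corollary, with all the analytic content already residing in Proposition~\ref{prop-apriori-estm}. (The same reasoning, with $\delta=0$, yields uniqueness for SPDE~\eqref{SPDE} itself.)
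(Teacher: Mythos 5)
Your proof is correct and is precisely the argument the paper has in mind: it states the corollary as an ``immediate consequence'' of Proposition~\ref{prop-apriori-estm}, i.e.\ linearity reduces to the homogeneous equation and the a priori estimate with vanishing data forces the difference to be zero. Your extra step of applying the estimate at the lowered index $m-2$ to reconcile the regularity $\cS^2(H^m)$ of Definition~\ref{defn-solution} with the hypothesis $\cS^2(H^{m+1})\cap\cL^2(H^{m+2})$ of the proposition is exactly the right way to close the small gap the paper leaves implicit.
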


\begin{thm}\label{thm-BSPDE}
Let assumption $(\cA 1)$ hold. Assume $\underline{u}_0\in L^2(\Omega,\sF_0;H^m)$ and $f,g^k,h^k\in \cL^2(H^m)$ with $m\in \bR$, for $k=1,\dots,d_1$. SPDE  \eqref{SPDE-D} with $\delta=0$ (equivalently, SPDE \eqref{SPDE}) admits a unique solution $u\in \cS^2(H^{m})$ with $L_ku\in\cL^2(H^m)$, $k=1,\dots,d_1$, and
\begin{align}
&E\sup_{t\in[0,T]}\|u(t)\|_m^2+\sum_{k=1}^{d_1}E\int_{0}^T\|L_ku(t)\|_m^2\,dt
\nonumber\\
&
\leq C\left\{
E\|\underline{u}_0\|_m^2 + E\int_{0}^T \left(\|f(s)\|_m^2+\|g(s)\|_m^2+\|h(s)\|_m^2\right)\,ds\right\},\label{estim-thm}
\end{align}
with $C$ depending on $T,m,\sigma,\theta,b,c$ and $\beta$.
\end{thm}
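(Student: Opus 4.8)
The plan is to obtain the solution of the degenerate SPDE \eqref{SPDE} (i.e.\ \eqref{SPDE-D} with $\delta=0$) as a limit of solutions of the super-parabolic approximations \eqref{SPDE-D} with $\delta>0$, exploiting crucially that the a priori bound \eqref{estim-aprioi-prop} in Proposition \ref{prop-apriori-estm} is \emph{uniform in} $\delta$. Uniqueness is already settled by Corollary \ref{cor-uniqn}, so only existence and the estimate \eqref{estim-thm} need to be proved. First I would note that for each fixed $\delta>0$ equation \eqref{SPDE-D} is a non-degenerate (super-parabolic) SPDE with coefficients in $\cL^\infty(C_b^\infty)$, so by the classical $L^2$-theory of Krylov--Rozovskii (see \cite{Krylov_Rozovskii81,Kryl96}) it admits a unique solution $u_\delta\in\cS^2(H^{m+1})\cap\cL^2(H^{m+2})$; since the data $\underline u_0,f,g,h$ are only $H^m$-valued, one would first mollify them (e.g.\ convolve with a smooth kernel, or apply $(1-\eps\Delta)^{-N}$) to get smooth data, solve, and use the linearity together with the $\delta$-uniform bound of Proposition \ref{prop-apriori-estm} to pass the mollification to the limit; this produces $u_\delta\in\cS^2(H^{m+1})\cap\cL^2(H^{m+2})$ solving \eqref{SPDE-D} for the original data, with the bound \eqref{estim-aprioi-prop} holding for $u_\delta$ with a constant independent of $\delta$.

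Next, since $\{u_\delta\}_{\delta\in(0,1]}$ is bounded in $\cS^2(H^m)$ and $\{L_ku_\delta\}$ is bounded in $\cL^2(H^m)$ by \eqref{estim-aprioi-prop} (with constants independent of $\delta$), and $\cL^2(H^m)$ is a Hilbert space, I would extract a subsequence $\delta_n\downarrow 0$ along which $u_{\delta_n}\rightharpoonup u$ weakly in $\cL^2(H^m)$ and $L_ku_{\delta_n}\rightharpoonup v_k$ weakly in $\cL^2(H^m)$ for each $k$. A routine argument (testing against fixed $\zeta\in C_c^\infty(\bR)\otimes C_c^\infty(\bR^d)$ and using that $L_k$ is a closed operator, or integrating by parts to move $L_k$ off $u_{\delta_n}$) identifies $v_k=L_ku$ in the distributional sense. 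Then one passes to the limit in the weak (distributional) formulation of \eqref{SPDE-D}: every term is linear in $(u_\delta,L_ku_\delta,M_ku_\delta,\ldots)$ tested against a fixed smooth compactly supported $\zeta$, so the deterministic-integral terms converge by weak convergence in $\cL^2(H^m)$; the extra term $\delta\Delta u_\delta$ contributes $\delta_n\langle\Delta\zeta,u_{\delta_n}\rangle\to0$ because $\|u_{\delta_n}\|_{\cL^2(H^m)}$ is bounded; and the stochastic integral term passes to the limit because $t\mapsto\int_0^t\langle\zeta,(M_ku+\beta^ku+h^k)(s)\rangle\,dW_s^k$ is (after the weak limit is taken inside, using the It\^o isometry and boundedness of the integrands in $\cL^2$) a continuous linear functional of the relevant quantities. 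This shows $u$ solves \eqref{SPDE} in the sense of Definition \ref{defn-solution}. Finally, \eqref{estim-thm} follows from \eqref{estim-aprioi-prop} by lower semicontinuity of the norms $\|\cdot\|_{\cS^2(H^m)}$ and $\|\cdot\|_{\cL^2(H^m)}$ under weak (resp.\ weak-$*$) convergence, with the $\delta$-independent constant carried through; the membership $u\in\cS^2(H^m)$ (continuity in time, not merely boundedness) is then recovered from the equation itself, i.e.\ by writing $u(t)$ as $\underline u_0$ plus a Bochner integral plus a stochastic integral, each of which is $H^m$-continuous in $t$ a.s.

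The main obstacle is the limit passage in the stochastic term and the recovery of \emph{continuity in time} of the limit $u$ in $\cS^2(H^m)$: weak convergence in $\cL^2(H^m)$ alone does not give a continuous modification, so one must argue via the equation (the drift integral is $H^m$-continuous and the martingale part is $H^m$-continuous provided $M_ku+\beta^ku+h^k\in\cL^2(H^m)$, which needs $L_ku\in\cL^2(H^m)$ — hence the importance of the gradient estimate being part of \eqref{estim-aprioi-prop}). A secondary technical point is justifying that the mollified data produce genuinely smooth solutions to which the classical non-degenerate theory applies, and that the mollification can be removed uniformly in $\delta$; both are handled by linearity and Proposition \ref{prop-apriori-estm}. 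Once these are in place, the supremum-in-time bound in \eqref{estim-thm} comes either directly from the lower semicontinuity of $\|\cdot\|_{\cS^2(H^m)}$ or, more safely, by applying It\^o's formula to $\|I^mu(t)\|^2$ for the limit equation exactly as in the proof of Proposition \ref{prop-apriori-estm} (now with $\delta=0$), which is legitimate since $u\in\cS^2(H^m)$ with $L_ku\in\cL^2(H^m)$ gives enough regularity for the It\^o formula of \cite{Krylov_Rozovskii81} after a further mollification in space.
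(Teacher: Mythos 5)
Your proposal follows the same basic strategy as the paper: approximate by super-parabolic equations ($\delta>0$) with mollified data, invoke the classical non-degenerate theory for the approximations, exploit that the a priori estimate of Proposition \ref{prop-apriori-estm} is uniform in $\delta$, and pass to the limit, with uniqueness supplied by Corollary \ref{cor-uniqn}. The two executions differ in the order of the limits and in the mode of convergence, and the difference matters. The paper keeps the data smooth (in $H^{m+5}$) while sending $\delta_l\downarrow0$ first: the smooth solutions $u_{l,n}$ are uniformly bounded in $\cS^2(H^{m+4})$, so $\delta_l\Delta u_{l,n}\to0$ strongly in $\cL^2(H^{m+2})$ and Proposition \ref{prop-apriori-estm} applied to \emph{differences} $u_{l,n}-u_{l',n}$ (which still lie in $\cS^2(H^{m+3})\cap\cL^2(H^{m+4})$, i.e.\ within the hypotheses of that proposition two levels above the estimate) yields a Cauchy sequence, hence \emph{strong} convergence in $\cS^2(H^{m+2})$; the mollification is removed afterwards by the same device. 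This delivers the $E\sup_t$ part of \eqref{estim-thm} directly, with no lower-semicontinuity issue. Your route --- un-mollify first at fixed $\delta$, then extract weak limits as $\delta\to0$ --- can be made to work, but it forces you to confront exactly the two difficulties you name: $E\sup_{t}\|\cdot\|_m^2$ is not lower semicontinuous under weak convergence in $\cL^2(H^m)$, and continuity in time must be recovered from the equation; your fallback (rerun the It\^o-formula computation for the limit equation) is the right fix but amounts to re-proving Proposition \ref{prop-apriori-estm} at a level where its stated hypotheses ($u\in\cS^2(H^{m+1})\cap\cL^2(H^{m+2})$) no longer hold, so it needs its own approximation argument. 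One concrete inaccuracy: for data that are only $H^m$-valued, the solution $u_\delta$ of the $\delta$-equation is \emph{not} in $\cS^2(H^{m+1})\cap\cL^2(H^{m+2})$ as you assert (that would require $H^{m+1}$ data); for fixed $\delta>0$ one only gets $\cS^2(H^m)\cap\cL^2(H^{m+1})$, which is below the regularity needed to invoke Proposition \ref{prop-apriori-estm} at level $m$. This is precisely why the paper postpones removing the mollification until after $\delta\to0$.
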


\begin{proof}
Choose $\{\delta_l\}_{l\in\bN^+}\subset(0,1)$, 
$\{\underline u^n_0\}_{n\in\bN^+}\subset L^2(\Omega,\sF_0;H^{m+5})$ and $\{f_n,g_n^k,h_n^k\}_{n\in\bN^+}\subset \cL^2(H^{m+5})$, for $k=1,\dots,d_1$, such that $\delta_l$ converges down to $0$ and 
$$
\lim_{n\rightarrow \infty} \,\|\underline{u}_0^n-\underline{u}_0\|_{L^2(\Omega;H^m)} 
+\|\left(f_n-f,g_n-g,h_n-h   \right)\|_{\cL^2(H^m)}
\,=0.
$$
By $L^p$-theory for SPDEs (see \cite{Kryl96} for instance), SPDE \eqref{SPDE-D} admits a unique solution $u_{l,n}\in \cS^2(H^{m+5})\cap \cL^2(H^{m+6}) $ associated with $(\delta_l,f_n,g_n,h_n,\underline{u}^n_0)$. 
 
 Fixing $n$, one deduces from Proposition \ref{prop-apriori-estm} that  $\{(u_{l,n},L_ku_{l,n})\}_{l\in\bN^+}$ is bounded in $\cS^2(H^{m+4})\times \cL^2(H^{m+4})$, $k=1,\dots,d_1$. Observe that $\delta_l\Delta u_{l,n}$ tends to zero in $\cL^2(H^{m+2})$ as $l$ goes to infinity. Therefore, letting $l$ tend to infinity, we derive from Proposition \ref{prop-apriori-estm} and Corollary \ref{cor-uniqn} the unique solution $u_n$ for SPDE \eqref{SPDE-D} associated with $(f_n,g_n,h_n,\underline{u}_0^n)$ and $\delta=0$ such that $(u_n,L_ku_n)\in\cS^{2}(H^{m+2})\times \cL^2(H^{m+2})$, for $k=1,\dots,d_1$. 
 
 Furthermore, letting $n$ go to infinity, again by Proposition \ref{prop-apriori-estm} and Corollary \ref{cor-uniqn}, one obtains the unique solution $u$ and associated estimates. This completes the proof.
\end{proof}
Here, we would note that the above proof is based on methods of strong convergence which is different from the weak convergence developed in \cite{krylov_1986charcteristic}. This is basically because of the linearity of the concerned equations and the smoothness assumptions on coefficients in $(\mathcal A 1)$, and it makes the passage to limits more straightforward through approximations.

\begin{rmk}\label{rmk-ito-forml}
Consider the particular case $m=0$ in Theorem \ref{thm-BSPDE}. In view of the approximations in the above proof, through similar calculations as in the proof of Proposition \ref{prop-apriori-estm}, we can get the following estimate
  \begin{align}
&\|u(t)\|^{2}
-\int_0^t \!\left\langle u(s),\,(-D_i\theta^{ik}u+2\beta^k u+2h^k)(s)\,dW^k_s\right\rangle\nonumber
\\
&\leq\|\underline u_0\|^{2}
-(1-\eps)\int_0^t\sum_{k=1}^{d_1}\| L_ku(s)\|^2\,ds 
+C_{\eps}\int_0^t \|u(s)\|^{2}\,ds
\nonumber\\
&\quad + \int_0^t \left(\|h(s)\|^2+ 2\left\langle u(s),\,(L_k'g^k+cu+f)(s)\right\rangle\right)\,ds\quad \text{a.s.},\,\,\forall \eps \in(0,1).
\label{est-l2-3}
\end{align}
 Assume further $c\geq 0$. Put $u_{\lambda}=(u-{\lambda})^+:=\max\{u-{\lambda},0\}$ for ${\lambda}\in [0,\infty)$. If we start from the It\^o formula for the square norm of the positive part of solution (see \cite[Corollary 3.11]{QiuWei-RBSPDE-2013}), in a similar way to the above estimate, we have
\begin{align}
&\|u_{\lambda}(t)\|^{2}
-\int_0^t \!\!\!\langle u_{\lambda}(s),\,(-D_i\theta^{ik}u_{\lambda}+2\beta^k u_{\lambda}+2h^k)(s)\,dW^k_s\rangle\nonumber
\\
&\leq\|u_{\lambda}(0)\|^{2}
-(1-\eps)\int_0^t\sum_{k=1}^{d_1}\| L_ku_{\lambda}(s)\|^2\,ds 
+C_{\eps}\int_0^t \left(\|u_{\lambda}(s)\|^{2}+\langle |u_{\lambda}|,\,\lambda 1_{\{u_{\lambda}>0\}}\rangle (s)\right)\,ds
\nonumber\\
&\quad + \int_0^t \left(\|h(s)1_{\{u_{\lambda}>0\}}\|^2+ 2\left\langle u_{\lambda}(s),\,(L_k'g^k+f)(s)\right\rangle\right)\,ds\quad \text{a.s.},\,\,\forall \eps \in(0,1).
\label{est-l2-uk-1}
\end{align}
where  we note that $u\leq u_{\lambda} + {\lambda}1_{\{u_{\lambda}>0\}}$.
\end{rmk}

Note that we do not assume the H\"ormander-type condition $(\cH)$ in Theorem \ref{thm-BSPDE}. In fact, we may get more regularity properties of solutions of SPDE \eqref{SPDE} under condition $(\cH)$, for which we first recall an estimate on the Lie bracket. 

\begin{lem}\label{lem-lie-bracket}(\cite[Lemma 4.1]{Qiu-2014-Hormander}).
For $\{J,L\}\subset\cup_{l\geq 0} \bV_l$, $m\in\bR$ and $\eps\in[0,1]$, there exists a positive constant $C$ such that almost surely for any $\phi\in H^{m}$ with $J\phi\in H^{m-1+\eps}$ and $L\phi\in H^m$, it holds that
\begin{align*}
\|[J,L]\phi\|_{m-1+\frac{\eps}{2}}
\leq C\left(
\|J\phi\|_{m-1+\eps}+\|L\phi\|_{m}+\|\phi\|_m
\right).
\end{align*}
\end{lem}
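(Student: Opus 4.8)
The plan is to reduce the inequality to a sharp ``half-derivative gain'' commutator estimate for first-order operators and to close it with an energy/quadratic-form computation in the spirit of Proposition~\ref{prop-apriori-estm}. First I would record the structural facts: every element of $\cup_{l\ge 0}\bV_l$ is a vector field $a^iD_i$ with coefficients $a^i\in\cL^\infty(C_b^\infty)$, and Lie brackets of such operators are again of this form, so $J$, $L$ and $[J,L]$ all belong to $\Psi_1$, their formal adjoints are again first order, and the zeroth-order pieces produced by those adjoints are multiplications by $\cL^\infty(C_b^\infty)$-functions, hence in $\Psi_0$ and harmless by Lemma~\ref{lem-pdo}(ii). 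Setting $\mu:=m-1+\tfrac{\eps}{2}$ and recalling $I^\mu=(1-\Delta)^{\mu/2}$, the quantity to control is $\|[J,L]\phi\|_\mu^2=\|I^\mu[J,L]\phi\|^2$, and I would start from the elementary identity
$$ I^\mu[J,L]=\bigl(JI^\mu L-LI^\mu J\bigr)+[I^\mu,J]\,L-[I^\mu,L]\,J. $$

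By Lemma~\ref{lem-pdo}(i) we have $[I^\mu,J],[I^\mu,L]\in\Psi_\mu$; since $m-\mu=1-\tfrac\eps2\ge0$ and $(m-1+\eps)-\mu=\tfrac\eps2\ge0$ (both because $\eps\in[0,1]$), this gives at once
$$ \|[I^\mu,J]L\phi\|\le C\|L\phi\|_m,\qquad \|[I^\mu,L]J\phi\|\le C\|J\phi\|_{m-1+\eps}, $$
so these two remainders are already dominated by the right-hand side, and it remains to handle the main term $\|(JI^\mu L-LI^\mu J)\phi\|$, or equivalently the inner product $\langle(JI^\mu L-LI^\mu J)\phi,\,I^\mu[J,L]\phi\rangle$. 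After integrating by parts to move $J^{*}$ and $L^{*}$ onto the second factor, the two resulting pieces should be paired respectively with $I^mL\phi$ and $I^{m-1+\eps}J\phi$, which are finite by hypothesis; the point is then to distribute the $I$-powers correctly --- splitting off one factor $I^{\eps/2}$ and commuting it past the vector fields, with the lower-order errors absorbed by Lemma~\ref{lem-pdo} --- so that a Cauchy--Schwarz and Young's inequality absorb a surviving factor of $\|[J,L]\phi\|_\mu$ back into the left-hand side, all remaining remainders being of order $\le\mu$ in $\phi$ and hence bounded by $C\|\phi\|_m$.

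The step I expect to be the main obstacle is precisely the treatment of this main term, because it is where the ``$\eps/2$'' is genuinely produced. Estimating $JL\phi$ and $LJ\phi$ separately yields at best $[J,L]\phi\in H^{m-1}$ (which is immediate from $[J,L]\in\Psi_1$ and $\phi\in H^m$), and the asserted $H^{m-1+\eps/2}$ is strictly stronger once $\eps>0$; indeed, a straightforward Cauchy--Schwarz after the integration by parts leaves $\|[J,L]\phi\|$ in the norm $H^{m-1+\eps}$ (from the $L\phi$-pairing) or even $H^{m}$ (from the $J\phi$-pairing), neither of which is admissible. Recovering the missing half derivative is a subelliptic phenomenon --- it is the same $1/2$-loss that appears in H\"ormander's sum-of-squares theorem --- and it cannot come from pseudodifferential mapping properties alone: one must exploit the cancellation in $JL-LJ$ together with the fact that $J$ and $L$ are honest vector fields, so that their symbols are linear in $\xi$, for instance via a microlocal/dyadic decomposition separating the frequency region on which the symbol of $J$ is elliptic from its complement. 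This is exactly the content of \cite[Lemma~4.1]{Qiu-2014-Hormander}, whose argument I would follow.
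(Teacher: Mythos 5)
Your reduction is sound as far as it goes, and it is essentially the paper's own route: the two commutator remainders $[I^{\mu},J]L\phi$ and $[I^{\mu},L]J\phi$ are disposed of exactly as you say, and the main term is indeed to be paired against $I^{m}L\phi$ and $I^{m-1+\eps}J\phi$. The gap is that you stop at precisely the step you identify as the obstacle, and your diagnosis of that step is incorrect. No microlocal or dyadic decomposition, and no ellipticity argument for the symbol of $J$, is needed: the paper closes the main term with the calculus of Lemma \ref{lem-pdo} alone. Set $A^{n}=I^{n-1}[J,L]\in\Psi_{n}$ and note $I^{m}A^{m-1+\eps}=I^{2(m-1+\eps/2)}[J,L]$, so that $\|[J,L]\phi\|_{m-1+\eps/2}^{2}=\langle JL\phi-LJ\phi,\,I^{m}A^{m-1+\eps}\phi\rangle$. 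For the $JL$-piece one moves $J^{*}=-J+\tilde c$ across and commutes it past $I^{m}$ and $A^{m-1+\eps}$,
\begin{align*}
\langle JL\phi,\,I^{m}A^{m-1+\eps}\phi\rangle
=\langle I^{m}L\phi,\,(A^{m-1+\eps}J^{*}+[J^{*},A^{m-1+\eps}])\phi\rangle
+\langle [I^{m},J]L\phi,\,A^{m-1+\eps}\phi\rangle,
\end{align*}
where $A^{m-1+\eps}J^{*}\phi\in L^{2}$ with norm $\leq C(\|J\phi\|_{m-1+\eps}+\|\phi\|_{m})$ because $J^{*}\phi\in H^{m-1+\eps}$ and $A^{m-1+\eps}\in\Psi_{m-1+\eps}$, while $[J^{*},A^{m-1+\eps}]\phi$, $[I^{m},J]L\phi$ and $A^{m-1+\eps}\phi$ all land in $L^{2}$ with norms $\leq C\|\phi\|_{m}$, $C\|L\phi\|_{m}$, $C\|\phi\|_{m}$ respectively (here one uses only $[J,L]\in\Psi_{1}$, i.e.\ the crude bound $[J,L]\phi\in H^{m-1}$, for the passive factor). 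The $LJ$-piece is symmetric with the exponents exchanged: write $I^{m}A^{m-1+\eps}=I^{m-1+\eps}A^{m}$, move $L^{*}$ across, and pair $I^{m-1+\eps}J\phi$ with $(A^{m}L^{*}+[L^{*},A^{m}])\phi\in L^{2}$, whose norm is $\leq C(\|L\phi\|_{m}+\|\phi\|_{m})$.

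So the ``missing half derivative'' is produced purely by the asymmetric splitting of the total order $2(m-1+\tfrac{\eps}{2})=m+(m-2+\eps)=(m-1+\eps)+(m-1)$ between the two factors of the quadratic form --- exactly the ``pseudodifferential mapping properties alone'' that you assert cannot suffice. Your closing appeal to \cite[Lemma 4.1]{Qiu-2014-Hormander} is circular, since that lemma \emph{is} the statement to be proved; as written, the decisive estimate is therefore asserted rather than derived. Two smaller points: if you pair before commuting $I^{\mu}$ inside, as above, no absorption of $\|[J,L]\phi\|_{\mu}$ into the left-hand side is needed at all; and the pairings should first be justified for $\phi\in H^{m+1}$, with the general case obtained by density, as the paper does.
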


The above lemma basically generalizes \cite[Lemma 4.2]{krylov2014hmander-PDE} from the deterministic case when $m=0$ to the stochastic case for any $m\in\bR$. Starting from estimate \eqref{estim-thm} of Theorem \ref{thm-BSPDE} and applying Lemma \ref{lem-lie-bracket} iteratively to elements of $\mathbb{V}_0,\dots,\mathbb{V}_{n_0}$, we have
\begin{cor}\label{cor-grad-est}
Assume the same hypothesis of Theorem \ref{thm-BSPDE}. Let condition $(\cH)$ hold. For the unique solution $u$ of SPDE \eqref{SPDE}, one has further $u\in\cL^2(H^{m+\eta})$ with
$$
E\int_0^T\|u(s)\|_{m+\eta}^2\,ds
\leq
C\left\{
E\|\underline{u}_0\|_m^2 + E\int_0^T\left( \|f(s)\|_m^2+\|g(s)\|_{m}^2+\|h(s)\|_m^2  \right)ds
\right\},
$$
where the constant $C$ depends on $T,m,n_0,\sigma,\theta,b,c$ and $\beta$.
\end{cor}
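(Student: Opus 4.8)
The plan is to treat this as an a priori estimate: one iterates the commutator bound of Lemma~\ref{lem-lie-bracket} exactly $n_0$ times, starting from the control on $L_k u$ furnished by \eqref{estim-thm}, and then converts the resulting control along the operators in $\bV_{n_0}$ into a genuine gain of regularity for $u$ itself via condition $(\cH)$. Note first that every element of $\cup_{l\ge 0}\bV_l$ is a first-order differential operator with $\cL^\infty(C_b^\infty)$-coefficients, since the second-order parts cancel in each Lie bracket; hence every commutator appearing below acts on $u(\omega,t)\in H^m$ and is a priori a well-defined distribution, and the whole point of the iteration is that Lemma~\ref{lem-lie-bracket} upgrades its regularity step by step with a uniform deterministic constant. (One could equally run the argument on the smooth approximants $u_{l,n}$ from the proof of Theorem~\ref{thm-BSPDE} and pass to the limit, but the pointwise-in-$(\omega,t)$ form of Lemma~\ref{lem-lie-bracket} makes a direct argument available.)

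Write $R_m:=E\|\underline u_0\|_m^2+E\int_0^T(\|f(s)\|_m^2+\|g(s)\|_m^2+\|h(s)\|_m^2)\,ds$. I would prove by induction on $j=0,1,\dots,n_0$ that for every $V\in\bV_j$ one has $Vu(\omega,t)\in H^{m-1+2^{-j}}$ for a.e.\ $(\omega,t)$, together with $E\int_0^T\|Vu(s)\|_{m-1+2^{-j}}^2\,ds\le C_jR_m$, with $C_j$ depending only on $T,m,j$ and the coefficients. For $j=0$ this is exactly \eqref{estim-thm} (recall $m-1+2^0=m$ and $\bV_0=\{L_1,\dots,L_{d_1}\}$), together with $\|u\|_{\cL^2(H^m)}^2\le T\,E\sup_{t}\|u(t)\|_m^2\le CR_m$. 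For the step $j-1\to j$, an element of $\bV_j$ either already lies in $\bV_{j-1}$, in which case $H^{m-1+2^{-(j-1)}}\hookrightarrow H^{m-1+2^{-j}}$ gives the claim, or has the form $[L_k,V]$ with $V\in\bV_{j-1}$ and $1\le k\le d_1$. For the latter I would apply Lemma~\ref{lem-lie-bracket} with $(J,L,\phi)=(V,L_k,u)$, base index $m$, and gain parameter $\eps=2^{-(j-1)}\in(0,1]$: the induction hypothesis gives $Vu(\omega,t)\in H^{m-1+\eps}$, while \eqref{estim-thm} gives $L_ku(\omega,t)\in H^m$ and $u(\omega,t)\in H^m$ for a.e.\ $(\omega,t)$, so
$$
\big\|[L_k,V]u\big\|_{m-1+2^{-j}}\le C\big(\|Vu\|_{m-1+2^{-(j-1)}}+\|L_ku\|_m+\|u\|_m\big)\quad\text{ for a.e. }(\omega,t).
$$
Squaring, integrating over $\Omega\times[0,T]$, and invoking the induction hypothesis, \eqref{estim-thm} and the bound on $\|u\|_{\cL^2(H^m)}$ closes the induction.

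With $j=n_0$, so that $2^{-n_0}=\eta$, I would conclude as follows. By $(\cH)$, for each $i=1,\dots,d$ there is a representation $D_i=\sum_{V\in\bV_{n_0}}a_{i,V}V$ with $a_{i,V}\in\cL^\infty(C_b^\infty)$, whence $D_iu(\omega,t)=\sum_{V}a_{i,V}(\omega,t,\cdot)\,(Vu(\omega,t))$. Picking an integer $N>|m-1+\eta|$ and applying Lemma~\ref{lem-pdo}(ii), $\|a_{i,V}\,Vu\|_{m-1+\eta}\le C\|a_{i,V}\|_{C^N}\|Vu\|_{m-1+\eta}$, and since $\|a_{i,V}\|_{C^N}$ is essentially bounded this yields $E\int_0^T\|D_iu(s)\|_{m-1+\eta}^2\,ds\le CR_m$ for every $i$. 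Finally, by Plancherel's identity $\|v\|_{m+\eta}^2\le C\big(\|v\|_{m-1+\eta}^2+\sum_{i=1}^d\|D_iv\|_{m-1+\eta}^2\big)$, and since $\|u\|_{m-1+\eta}\le\|u\|_m$, one gets $E\int_0^T\|u(s)\|_{m+\eta}^2\,ds\le CR_m$ with $C$ depending on $T,m,n_0$ and the $\cL^\infty(C_b^\infty)$-data $\sigma,\theta,b,c,\beta$, which is the assertion.

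The main obstacle is the correct bookkeeping of the induction rather than any isolated hard estimate: one must verify at each level that the hypotheses of Lemma~\ref{lem-lie-bracket} hold with a \emph{strictly positive} gain $\eps=2^{-(j-1)}$ — this is what makes each step a genuine $\tfrac12$-improvement (and hence produces $\eta=2^{-n_0}$) rather than merely reproducing the trivial $H^{m-1}$-regularity of a first-order operator applied to $u\in H^m$ — and that $\bV_j\setminus\bV_{j-1}$ consists precisely of the brackets $[L_k,V]$ with $V\in\bV_{j-1}$. Once the iteration is in place, the passage from control along $\bV_{n_0}$ to the $H^{m+\eta}$-bound is immediate from $(\cH)$ and Lemma~\ref{lem-pdo}(ii).
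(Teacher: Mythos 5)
Your proposal is correct and follows exactly the route the paper indicates (the paper's entire proof of this corollary is the remark that one starts from \eqref{estim-thm} and applies Lemma \ref{lem-lie-bracket} iteratively to the elements of $\bV_0,\dots,\bV_{n_0}$, plus the proof of that lemma); your induction with gain $\eps=2^{-(j-1)}$ at level $j$, the passage from $\bV_{n_0}$ to the $D_i$ via $(\cH)$ and Lemma \ref{lem-pdo}(ii), and the identity $\|v\|_{m+\eta}^2\simeq\|v\|_{m-1+\eta}^2+\sum_i\|D_iv\|_{m-1+\eta}^2$ are precisely the intended bookkeeping. No gaps.
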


The estimate on solution of SPDE \eqref{SPDE} for the case $m=0$ in Corollary \ref{cor-grad-est} plays an important role in Section \ref{sec:lp-inf} for the maximum principle of weak solutions. Therefore, for the reader's convenience, we would provide a sketched proof of Lemma \ref{lem-lie-bracket} from which Corollary \ref{cor-grad-est} follows immediately.
\begin{proof}[Proof of Lemma \ref{lem-lie-bracket}]
Assume first $\phi\in H^{m+1}$. Setting $A^n=I^{n-1}[J,L]$, we have $A^n\in\Psi_{n}$ almost surely for each $n\in\bR$. As the adjoint operator of $J$ and $L$, $J^*=-J+\tilde{c}$ and $L^*=-L+\bar{c}$ with $\tilde{c},\bar{c} \in \cL^{\infty}(C^{\infty}_b)$. By Lemma \ref{lem-pdo}, we have
\begin{align*}
&\langle JL\phi,\,I^mA^{m-1+\eps}\phi \rangle\\
&=\langle L\phi,\,(I^mJ^*+[J^*,I^m])A^{m-1+\eps}\phi\rangle\\
&=\langle I^mL\phi,\,(A^{m-1+\eps}J^*+[J^*,A^{m-1+\eps}])\phi  \rangle 
+\langle [I^m,J]L\phi,\,A^{m-1+\eps}\phi\rangle\\
&\leq C\left( \|L\phi\|_m^2+ \|J\phi\|^2_{m-1+\eps}+\|\phi\|_m^2   \right)
\end{align*}
and 
\begin{align*}
&\langle LJ\phi,\,I^mA^{m-1+\eps}\phi \rangle\\
&=\langle J\phi,\,(I^{m-1+\eps}L^*+[L^*,I^{m-1+\eps}])A^m\phi\rangle\\
&=\langle I^{m-1+\eps}J\phi,\,(A^mL^*+[L^*,A^m])\phi\rangle
+\langle I^{m-1+\eps}J\phi,\, I^{-(m-1+\eps)}[L^*,I^{m-1+\eps}]A^m\phi\rangle\\
&\leq
C\left(\|J\phi\|_{m-1+\eps}^2+\|L\phi\|_m^2+\|\phi\|_m^2 \right).
\end{align*}
Hence, 
\begin{align*}
\|[J,L]\phi\|_{m-1+\frac{\eps}{2}}
=\langle [J,L]\phi,\,I^mA^{m-1+\eps}\phi \rangle^{\frac{1}{2}}
\leq C\left(\|J\phi\|_{m-1+\eps}+\|L\phi\|_m+\|\phi\|_m \right).
\end{align*}
Through standard density arguments, one verifies that the above estimate also holds for any $\phi\in H^{m}$ with $J\phi\in H^{m-1+\eps}$ and $L\phi\in H^m$.  
\end{proof}

\subsection{H\"ormander-type theorem for SPDEs}

Inspired by the filtering theory of partially observable diffusion processes,  Krylov \cite{krylov2013hypoellipticity,krylov2013-Hormder-SPDE} has just obtained the H\"ormander-type theorem for  SPDEs, which states the spatial smoothness of solutions. The method therein relies on the generalized It\^o-Wentzell formula and associated results on deterministic PDEs. Next to the above established $L^2$-theory, we intend to derive the following H\"ormander-type theorem for SPDE \eqref{SPDE} under the condition $(\cH)$ with an analytical approach. 

\begin{thm}\label{thm-hormander}
Let assumptions $({\mathcal H} )$ and $(\cA 1)$ hold.
If $f\in\cap_{n\in\bR}\cL^2(H^n)$, $g,h\in\cap_{n\in\bR}\cL^2((H^n)^{d_1})$, and $\underline{u}_0\in L^2(\Omega;H^m)$ for some $m\in\bR$, then for the unique solution $u$ of SPDE \eqref{SPDE} in Theorem \ref{thm-BSPDE}, one has for any $\eps\in(0,T)$,
$$u\in\cap_{n\in\bR} L^2(\Omega;C([\eps,T];H^n)),$$ 
and for any $n\in\bR$,
\begin{align}
&E\sup_{t\in[\eps,T]}\| u(t)\|_{n}^2
+E\int_{\eps}^{T} \| u(t)\|_{n+\eta}^2\,dt
\nonumber\\
&\leq C\left\{E\|\underline{u}_0\|_m^2+
E\int_{0}^{T} \left(\|f(s)\|_{n}^2+ \|g(s)\|_{n}^2+ \|h(s)\|_{n}^2\right) ds\right\},\label{est-hmder}
\end{align}
with the constant $C$ depending on $\eps,n,T,m,n_0,\sigma,\theta,\gamma,b$ and $c$. In particular, the random field $u(t,x)$ is almost surely infinitely differentiable with respect to $x$ on $(0,T]\times\bR^d$ and each derivative is a continuous function on $(0,T]\times\bR^d$.
\end{thm}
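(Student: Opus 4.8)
The plan is a bootstrap on the Sobolev index, the engine being the regularity gain of Corollary~\ref{cor-grad-est} and the device that makes it iterate being a deterministic time cut-off near $t=0$: this cut-off simultaneously neutralizes the fact that $\underline u_0$ is only $H^m$-regular and converts the space--time-$L^2$ gain of Corollary~\ref{cor-grad-est} into a genuine gain in $\cS^2$ (i.e.\ uniformly in $t$). The structural point that makes finitely many steps suffice is that the gain $\eta=2^{-n_0}$ does not depend on the base index.

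\emph{One bootstrap step.} Fix $0<\eps'<\eps''\le T$ and a deterministic $\chi\in C^\infty([0,T])$ with $\chi\equiv0$ on $[0,\eps']$ and $\chi\equiv1$ on $[\eps'',T]$. Since the spatial operators $L_k,M_k,b^jD_j,c,\beta^k$ commute with multiplication by $\chi(t)$, a short computation from Definition~\ref{defn-solution} shows that $v:=\chi u$ solves an SPDE of exactly the form \eqref{SPDE} with the same coefficients (so $(\cA1)$ still holds), zero initial datum, and free terms $(\chi f+\chi'u,\ \chi g^k,\ \chi h^k)$, the new term $\chi'u$ entering only in the $f$-slot. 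Assume inductively that $u\in L^2(\Omega\times[\eps',T];H^\ell)$ for some $\ell\ge m$; then $\chi'u\in\cL^2(H^\ell)$, and since $f,g,h$ lie in every $H^n$, all free terms of the $v$-equation are in $\cL^2(H^\ell)$. By uniqueness in $\cS^2(H^m)$ (Corollary~\ref{cor-uniqn}), $v$ coincides with the solution produced by Theorem~\ref{thm-BSPDE} at index $\ell$; hence $v\in\cS^2(H^\ell)$, $L_kv\in\cL^2(H^\ell)$, and Corollary~\ref{cor-grad-est} upgrades this to $v\in\cL^2(H^{\ell+\eta})$, with all norms bounded in terms of $\|\chi'u\|^2_{\cL^2(H^\ell)}+\|(f,g,h)\|^2_{\cL^2(H^\ell)}$. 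Restricting to $[\eps'',T]$, where $v=u$, this has raised $u$ to index $\ell$ in $\cS^2$ and index $\ell+\eta$ in space--time $L^2$ over $[\eps'',T]$.

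\emph{Iteration, estimate, and smoothness.} For a target index $n$, take the smallest $K$ with $m+K\eta\ge n$ and a chain $\eps/2=\eps_0<\eps_1<\cdots<\eps_K=\eps$; Theorem~\ref{thm-BSPDE} together with Corollary~\ref{cor-grad-est} applied to $u$ itself provides the base case $\ell=m$ valid on all of $[0,T]$. Running the step above $K$ times, with cut-offs $\chi_j$ ($\chi_j\equiv0$ on $[0,\eps_{j-1}]$, $\chi_j\equiv1$ on $[\eps_j,T]$) and indices $\ell=m+(j-1)\eta$, yields $u\in L^2(\Omega;C([\eps,T];H^n))$ and $E\int_\eps^T\|u(t)\|^2_{n+\eta}\,dt<\infty$, where at the last stage one restarts directly at index $n$ (legitimate because $\chi_K'u$ is by then in $L^2(\Omega\times[\eps_{K-1},T];H^{m+K\eta})\subset L^2(\Omega\times[\eps_{K-1},T];H^n)$). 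Composing the finitely many a priori inequalities---the $\eps$-dependence entering only through the $\|\chi_j'\|_\infty$, and, by minimality of $K$, the data norms being needed only at levels $\le n$---gives \eqref{est-hmder} with $C$ depending on $\eps,n,T,m,n_0$ and the coefficients. Letting $\eps\downarrow0$ gives $u\in\cap_{n\in\bR}L^2(\Omega;C([\eps,T];H^n))$ for every $\eps\in(0,T)$; then the bounded Sobolev embeddings $H^n\hookrightarrow C^j_b(\bR^d)$ for $n>j+d/2$ show that, for a.e.\ $\omega$, $t\mapsto u(\omega,t)$ is continuous into $C^j_b$ on each $[\eps,T]$, so that $(t,x)\mapsto D^\alpha_xu(\omega,t,x)$ is continuous on $(0,T]\times\bR^d$ for every multi-index $\alpha$.

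\emph{Expected main obstacle.} The one genuinely delicate point is the first half of the bootstrap step: verifying rigorously that $v=\chi u$ is a distributional solution of an SPDE of type \eqref{SPDE} still satisfying $(\cA1)$, so that Theorem~\ref{thm-BSPDE} and, above all, Corollary~\ref{cor-grad-est} (hence the pseudo-differential and Lie-bracket estimates of Lemmas~\ref{lem-pdo} and~\ref{lem-lie-bracket}) apply to it verbatim, and then propagating the constants through the iteration so that the right-hand side of \eqref{est-hmder} stays at level $n$. Everything else is the harmonic-analysis bookkeeping already developed in Section~\ref{sec-l2-thy}.
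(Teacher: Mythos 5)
Your proposal is correct and follows essentially the same route as the paper: the paper multiplies $u$ by the deterministic time weight $t$ (producing the auxiliary equation for $\bar u=tu$ with zero initial datum and the extra drift term $u$), then applies the a priori estimate of Theorem \ref{thm-BSPDE} together with the iterated Lie-bracket gain of Corollary \ref{cor-grad-est} and bootstraps over the nested intervals $[\eps_j,T]$, exactly as you do with the smooth cut-offs $\chi_j$ and the extra term $\chi_j'u$. The choice of $\chi(t)$ versus the weight $t$ is cosmetic; the engine, the induction on the Sobolev index in steps of $\eta=2^{-n_0}$, and the concluding Sobolev embedding are the same.
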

\label{sec:proof-main-thm}

\begin{proof}
By Theorem \ref{thm-BSPDE}, SPDE \eqref{SPDE} admits a unique solution $u\in\cS^2(H^m)$ and the random field $\bar{u}(t,x):=tu(t,x)$ is the unique solution of SPDE
\begin{equation}\label{SPDE-1}
  \left\{\begin{array}{l}
  \begin{split}
  d\bar{u}(t,x)=\,&\displaystyle \left[ \frac{1}{2}(L_k^2+M_k^2)\bar{u}
         +{b}^jD_j\bar{u}+c\bar{u}+u+t\left(f+L_k'g^k+M_k'h^k\right)
                \right](t,x)\, dt\\ &\displaystyle
           +\left[tM_k\bar{u}+t\beta^k \bar{u}+th^k\right](t,x)\, dW_{t}^{k}, \quad
                     (t,x)\in Q;\\
    \bar{u}(0,x)=\, &0, \quad x\in\bR^d,
    \end{split}
  \end{array}\right.
\end{equation}
with 
\begin{align*}
&E\sup_{t\in[0,T]}\|\bar u(t)\|_m^2+\sum_{k=1}^{d_1}E\int_{0}^T\|L_k\bar u(t)\|_m^2dt
\nonumber\\
&\leq C\left(T^2+1\right)E\int_{0}^T \left(\|f(s)\|_m^2+\|g(s)\|_m^2+\|h(s)\|_m^2+\|u(s)\|_m^2\right)\,ds.
\end{align*}

Starting from the above estimate, we apply Lemma \ref{lem-lie-bracket} iteratively  to elements of  $\bV_0,\dots,\bV_{n_0}$. Under condition $(\cH)$, there arrives the estimate
\begin{align}\label{grad-est}
\int_0^T\|D\bar{u}\|^2_{m-1+\eta}ds\leq 
C\left(T^2+1\right)E\int_{0}^T \left(\|f(s)\|_m^2+\|g(s)\|_m^2+\|h(s)\|_m^2+\|{u}(s)\|_m^2\right)\,ds.
\end{align}
Fix any $\eps\in(0,T\wedge 1)$ and define $\eps_l=\sum_{i=1}^l\frac{\eps}{2^i}$ for $l\in\bN^+$. By interpolation and Theorem \ref{thm-BSPDE}, we have 
\begin{align*}
&E\sup_{t\in[\eps_1,T]}\| u(t)\|_m^2
+E\int_{\eps_1}^{T}\| u(t)\|_{m+2^{-n_0}}^2 dt
\nonumber\\
&
\leq \frac{C2(T^2+1)}{\eps}E\int_{0}^{T} \left(\|f(s)\|_m^2+\|g(s)\|_m^2+\|h(s)\|_m^2+\|u(s)\|_m^2\right)\,ds.
\end{align*}

Since $f\in\cap_{n\in\bR}\cL^2(H^n)$ and $g,h\in \cap_{n\in\bR}\cL^2((H^n)^{d_1})$, by iteration we obtain for any $j\in\bN^+$,
\begin{align}
&E\sup_{t\in[\eps_j,T]}\| u(t)\|_{m+(j-1)\eta}^2
+E\int_{\eps_j}^{T} \| u(t)\|_{m+j\eta}^2\,dt
\nonumber\\
&\leq \frac{C2^j(T^2+1)}{\eps}
E\int_{\eps_{j-1}}^{T} \!\!\!\left(\|f(s)\|_{m+(j-1)\eta}^2+ \|g(s)\|_{m+(j-1)\eta}^2+ \|h(s)\|_{m+(j-1)\eta}^2+\|u(s)\|_{m+(j-1)\eta}\right)\! ds,\nonumber
\end{align}
which together with estimate \eqref{estim-thm}, implies by iteration that
\begin{align}
&E\sup_{t\in[\eps_j,T]}\| u(t)\|_{m+(j-1)\eta}^2
+E\int_{\eps_j}^{T} \| u(t)\|_{m+j\eta}^2\,dt
\nonumber\\
&\leq C(j)\left\{E\|\underline{u}_0\|_m^2+
E\int_{0}^{T} \left(\|f(s)\|_{m+(j-1)\eta}^2+ \|g(s)\|_{m+(j-1)\eta}^2+ \|h(s)\|_{m+(j-1)\eta}^2\right) ds\right\}.\nonumber
\end{align}

Hence, for any $\eps\in (0,T)$, one has $u\in\cap_{n\in\bR} L^2(\Omega;C([\eps,T];H^n))$ 
and the estimate \eqref{est-hmder} holds. In particular, by Sobolev embedding theorem, $u(t,x)$ is almost surely infinitely differentiable with respect to $x$ and each derivative is a continuous function on $(0,T]\times\bR^d$.
\end{proof}

\begin{rmk}
By Theorem \ref{thm-hormander}, we have the global spatial smoothness of the solution in time interval $(0,T]$. A similar result exists in Krylov's recent work \cite{krylov2013-Hormder-SPDE,krylov2013hypoellipticity}, which states a local spatial smoothness of solution under a H\"ormander-type condition of local type; roughly speaking, as claimed in \cite{krylov2013-Hormder-SPDE}, if a H\"ormander-type condition and all the assumptions on coefficients just hold on a measurable subset $\Omega_0\times(t_1,t_2)\times B \subset \Omega\times[0,\infty)\times\bR^d$ where $\Omega_0\in\sF$, and $B$ is a ball in $\bR^d$, then any solution $u(\omega,t,x)$ satisfying the concerned SPDE on $\Omega_0\times(t_1,t_2)\times B$ admits a version that is, for almost all $(\omega,t)\in \Omega_0\times (t_1,t_2)$, infinitely differentiable with respect to $x$ on $B$. However, the method therein relies on the generalized It\^o-Wentzell formula and associated results on deterministic PDEs, while herein, we use an analytical approach on the basis of our $L^2$-theory and an estimate on the Lie bracket (Lemma \ref{lem-lie-bracket}). In fact, our method has the potential to derive the associated local results, but we would not seek such a generality in the present paper. In addition, we would mention that, to the best of our knowledge, the hypoellipticity for SPDEs was first considered by Chaleyat-Maurel and Michel \cite{chaleyat1984hypoellipticity}, where the coefficients depend on $(t,\omega)$ only through  a substituted Wiener process.

\end{rmk}

\section{$L^p$ estimates for the uniform norm of solutions}
\label{sec:lp-inf}


In this section, let assumptions $(\cA 1)$, $(\cA 2)$ and $(\cH)$ hold. By Theorem \ref{thm-BSPDE}, SPDE \eqref{SPDE} has a unique \textit{weak} solutoin. In this section, we shall prove the $L^p$-estimates for the time-space uniform norm of the weak solution. 
\begin{thm}\label{thm-Lp-inf}
 For the weak solution $u$ of SPDE \eqref{SPDE}, there exists $\theta_0\in (0,1]$ such that for any $p\in(0,\infty)$,
\begin{align*}
E\|u^{\mp}\|^p_{L^{\infty}(Q)} \leq C \left(\Lambda^{\mp}_{\bar p,\infty} + \Lambda^{\mp}_{\frac{p}{\theta_0}}  \right)^p,
\end{align*}
with the constant $C$ depending on $d,p,n_0, T$ and the quantities related to the coefficients $\sigma,\theta,b,c$ and $\beta$.
\end{thm}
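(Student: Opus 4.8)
The plan is to prove the $L^p$-estimate by a stochastic version of the De Giorgi iteration, reducing everything to the non-negative part $u^+$ (the case $u^-$ follows by replacing $u$ with $-u$ and switching the roles of the forcing terms). First I would set $\lambda_j = M(1-2^{-j})$ for a level $M>0$ to be optimized, and consider the truncated processes $u_j := (u-\lambda_j)^+$; the key observation is that on $\{u_j>0\}$ one has $u_{j-1} \geq M 2^{-j}$, so indicators can be converted into powers of $u_{j-1}$ at the price of factors $2^j$. Starting from the It\^o formula for the square of the positive part (as recorded in estimate \eqref{est-l2-uk-1} of Remark \ref{rmk-ito-forml}, applied with $\lambda = \lambda_j$), take $\esssup$ over $\omega$, use $c \geq 0$ to drop the good sign terms, and bound the stochastic integral via Burkholder--Davis--Gundy as in the proof of Proposition \ref{prop-apriori-estm}. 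This gives a control of $\|u_j\|_{\cS^2(L^2)}^2 + \sum_k \|L_k u_j\|_{\cL^2(L^2)}^2$ by a right-hand side that is quadratic in $u_{j-1}$ restricted to the (small, as $M\to\infty$) set $A_{j-1} := \{(\omega,t,x): u_{j-1}>0\}$, together with the forcing terms $f^-,g,h$ integrated over $A_{j-1}$.

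Next I would invoke the gained spatial regularity: by Corollary \ref{cor-grad-est} with $m=0$ and condition $(\cH)$, the energy estimate upgrades $L_k u_j \in \cL^2(L^2)$ to $u_j \in \cL^2(H^{\eta})$ with $\eta = 2^{-n_0}$, so that the left side controls $E\int_0^T \|u_j(t)\|_{\eta}^2\,dt$. Combining the $\cS^2(L^2)$ bound with the $\cL^2(H^\eta)$ bound and the parabolic Sobolev embedding $L^\infty(0,T;L^2)\cap L^2(0,T;H^\eta) \hookrightarrow L^{2(d+2\eta)/d}(Q)$ (the anisotropic Gagliardo--Nirenberg interpolation), one obtains a bound for $\|u_j\|_{L^{2\kappa}(Q)}$ with $\kappa = (d+2\eta)/d > 1$, still in terms of integrals over $A_{j-1}$. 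The point of having $\kappa>1$ is the self-improving gain of integrability that makes the iteration converge. Then H\"older's inequality on $A_{j-1}$, using that $u_{j-1}$ is bounded below by $M2^{-j}$ on $A_j \subset A_{j-1}$ and that the exponents in $(\cA 2)$ are tuned so the forcing norms sit in the dual scale, produces a recursion of the form
\begin{align*}
\Phi_j \leq C^{j}\, M^{-\rho}\, \Phi_{j-1}^{1+\nu} + (\text{lower-order terms involving } \Lambda_{\bar p,\infty}^{-}),
\end{align*}
where $\Phi_j$ is a suitable $\esssup_\omega$-norm of $u_j$ over $Q$, and $\rho,\nu>0$. By the standard De Giorgi fast-geometric-convergence lemma, $\Phi_j \to 0$ provided $M$ is chosen larger than a fixed multiple of $\Lambda_{\bar p,\infty}^{-}$ plus $\Phi_0^{\nu/\rho}$; this yields $\|u^-\|_{L^\infty(Q)} \leq C\big(\Lambda_{\bar p,\infty}^- + (\esssup_\omega \|u^-\|_{L^2(Q)})^{\theta_0}\big)$ almost surely, for some $\theta_0 \in (0,1]$ coming from the exponents.

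The final step is the passage from this almost-sure $L^\infty$ bound (with an $\esssup_\omega$ on the right) to the desired $L^p(\Omega)$ bound. Here one cannot simply take $\esssup_\omega$ of the $L^2(Q)$-norm because $u$ is only in $\cS^2(L^2)$; instead I would run the iteration pathwise but keep the $\Omega$-dependence quantitative — that is, replace the crude $\esssup_\omega$ at the top level by the $L^q(\Omega)$-norms afforded by $(\cA 2)$, so that $M$ is allowed to be a random variable controlled in every $L^q(\Omega)$, and then the De Giorgi lemma gives $\|u^-\|_{L^\infty(Q)} \leq C(\Lambda_{\bar p,\infty}^- + \Xi)$ with $\Xi$ an $\sF$-measurable quantity satisfying $E\Xi^p \leq C(\Lambda_{p/\theta_0}^-)^p$. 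Raising to the $p$-th power and taking expectations then gives exactly $E\|u^-\|_{L^\infty(Q)}^p \leq C(\Lambda_{\bar p,\infty}^- + \Lambda_{p/\theta_0}^-)^p$. The main obstacle I anticipate is precisely this interplay between the pathwise iteration and the stochastic integral: the BDG bound for the martingale term in \eqref{est-l2-uk-1} naturally produces an $E\sup_t(\cdots)$ rather than an $\esssup_\omega$ of a pathwise quantity, so one must set up the iteration so that at each level the stochastic term is absorbed into the good terms (the $\eps E\sup$ trick of Proposition \ref{prop-apriori-estm}) while the $\esssup_\omega$ structure is preserved only in the forcing contributions, and one must carefully track which norms of $(u_0,f,g,h)$ appear with an $L^\infty(\Omega)$ versus an $L^{p/\theta_0}(\Omega)$ weight — this bookkeeping, together with verifying that the exponents in $(\cA 2)$ are exactly those dictated by the Sobolev exponent $2(d+2\eta)/d$ and the H\"older splitting, is the technical heart of the argument.
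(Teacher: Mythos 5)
Your setup matches the paper's: the truncations $u_z=(u-\lambda(1-2^{-z}))^+$, the It\^o formula for the positive part from Remark \ref{rmk-ito-forml}, the upgrade from $\sum_k\|L_ku_z\|_{\cL^2(L^2)}$ to $\|u_z\|_{\cL^2(H^\eta)}$ via condition $(\cH)$ and Lemma \ref{lem-lie-bracket}, the embedding $L^\infty(0,T;L^2)\cap L^2(0,T;H^\eta)\hookrightarrow L^{2(d+2\eta)/d}(Q)$, and the H\"older bookkeeping that ties the exponents in $(\cA 2)$ to $\alpha_0$. All of that is Lemma \ref{lem-iteration} of the paper. The gap is in your treatment of the martingale term, which you yourself flag as the main obstacle but then propose to resolve in a way that does not work. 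Absorbing the stochastic integral by the ``$\eps E\sup$ trick'' of Proposition \ref{prop-apriori-estm} requires taking expectations (BDG is an $L^p(\Omega)$ statement), and once you take expectations the superlinear recursion $\Phi_z\leq C^z M^{-\rho}\Phi_{z-1}^{1+\nu}$ is no longer closed: $E[\Phi_{z-1}^{1+\nu}]$ is not controlled by $(E\Phi_{z-1})^{1+\nu}$, and chasing the required moments through infinitely many iteration steps sends the integrability order to infinity. Conversely, if you keep the recursion pathwise (as De Giorgi's fast-convergence lemma demands), BDG is unavailable and the term $\sup_t M_z(t)$ cannot simply be dropped or dominated by data.

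The paper's resolution, which is absent from your proposal, is to keep $\sup_{t}M_z(t)$ as an explicit additive term in the \emph{almost-sure} iteration inequality \eqref{itrn-schem}, and then to control it probabilistically rather than in the mean: Lemma \ref{lem-est-prob} bounds the quadratic variation pathwise by $\langle M_z\rangle_T\leq N^z\lambda^{-4\alpha_0}(U_{z-1})^{2+2\alpha_0}$, and the time-change representation $M_z(t)=B_{\langle M_z\rangle_t}$ together with the reflection principle yields
$\bP\bigl(\sup_t M_z(t)\geq\kappa\zeta,\ (U_{z-1})^{1+\alpha_0}\leq\zeta\bigr)\leq\exp\{-\kappa^2\lambda^{4\alpha_0}/(2N^z)\}$.
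The De Giorgi iteration is then run on the \emph{events} $A_z=\{U_z\leq\lambda^{2\theta_0}\nu^{-z}\}$, producing the tail estimate of Proposition \ref{lem-uplus}, and the $L^p$ bound follows from the layer-cake formula after a separate (and unproblematic, because it involves no superlinear recursion) moment bound $E(U_0)^{p/(2\theta_0)}\leq C\,(\Lambda^{+}_{p/\theta_0})^{p/\theta_0}$ obtained by BDG on short time intervals. Without this exponential-martingale tail argument, or an equivalent device, your iteration does not close; supplying it is the essential missing step.
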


An immediate consequence is the following comparison principle. 
\begin{cor}
Suppose that random field $u$ is the weak solution of SPDE \eqref{SPDE}. Let $\tilde u$ be the solution of SPDE \eqref{SPDE} with the initial value $\underline u_0$ and external force $f$ being replaced by $\tilde{\underline u}_0$ and $\tilde f$ respectively. Suppose further that
$$
f\leq \tilde f,\quad \mathbb{P}\otimes dt \otimes dx\text{-a.e. and }\underline{u}_0\leq \tilde{\underline u}_0,\quad \mathbb{P}\otimes dx\text{-a.e.}
$$ 
Then, there holds $u\leq \tilde u$, $\mathbb{P}\otimes dt \otimes dx$-a.e.
\end{cor}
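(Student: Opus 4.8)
The plan is to exploit the linearity of SPDE \eqref{SPDE} and reduce the comparison to the one-sided maximum principle of Theorem \ref{thm-Lp-inf}. First I would set $w := u - \tilde u$. Since the coefficients $\sigma,\theta,b,c,\beta$ of the two equations coincide and only the initial datum and the free term $f$ differ, subtracting the two weak formulations in Definition \ref{defn-solution} shows that $w\in\cS^2(L^2)$ is again the weak solution of SPDE \eqref{SPDE}, now driven by the data
$$
\underline{w}_0 := \underline{u}_0 - \tilde{\underline u}_0,\qquad f_w := f - \tilde f,\qquad g_w\equiv 0,\qquad h_w\equiv 0,
$$
with all structural coefficients unchanged. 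The hypotheses $f\le\tilde f$ and $\underline{u}_0\le\tilde{\underline u}_0$ then translate exactly into
$$
\underline{w}_0^{+}=0\ \ \mathbb{P}\otimes dx\text{-a.e.},\qquad f_w^{+}=0\ \ \mathbb{P}\otimes dt\otimes dx\text{-a.e.}
$$

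Next I would verify that the data $(\underline{w}_0, f_w, g_w, h_w)$ still satisfy assumption $(\cA 2)$, so that Theorem \ref{thm-Lp-inf} applies to $w$. This is immediate: $\underline{w}_0$ is a difference of two functions lying in $L^{\infty}(\Omega\times\bR^d)\cap\cap_{q>0}L^q(\Omega,\sF_0;L^2)$ and hence belongs to the same space; $f_w$ is a difference of two admissible free terms and therefore inherits every required integrability bound; and $g_w\equiv h_w\equiv 0$ trivially meet all conditions. The sign requirement $c\ge 0$ and the condition $(\cH)$ are untouched, since the coefficients are common to both equations.

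I would then apply Theorem \ref{thm-Lp-inf} to $w$ in the form bounding the positive part (the lower sign in the $\mp$ convention):
$$
E\|w^{+}\|^p_{L^{\infty}(Q)} \leq C\left(\Lambda^{+}_{\bar p,\infty} + \Lambda^{+}_{\frac{p}{\theta_0}}\right)^p,\qquad \forall\,p\in(0,\infty),
$$
where now $\Lambda^{+}_{\bar p,\infty}$ and $\Lambda^{+}_{p/\theta_0}$ are the quantities associated with the data of $w$. By their very definitions, each summand of these quantities is built only from $\underline{w}_0^{+}$, $f_w^{+}$, $g_w$ and $h_w$; by the previous step every one of these vanishes, so $\Lambda^{+}_{\bar p,\infty}=\Lambda^{+}_{p/\theta_0}=0$. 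Fixing any $p>0$ we obtain $E\|w^{+}\|^p_{L^{\infty}(Q)}=0$, whence $w^{+}=0$ and thus $u\le\tilde u$, $\mathbb{P}\otimes dt\otimes dx$-a.e.

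The argument collapses to a short deduction once the linear difference equation is identified, so the only points requiring genuine care are minor. The first is bookkeeping with the $\mp$ sign convention: one must ensure that the estimate chosen to control $w^{+}$ is precisely the one whose right-hand side involves solely the positive parts $\underline{w}_0^{+}$ and $f_w^{+}$ together with $g_w,h_w$, all of which are zero. The second is the legitimacy of subtracting the two weak formulations, which is unproblematic because both $u$ and $\tilde u$ belong to $\cS^2(L^2)$ and the equation is linear with common coefficients, so the $(M_ku+\beta^k u)$, $b^jD_j$, $c$, and $L_k^2+M_k^2$ terms combine linearly and no cross terms survive.
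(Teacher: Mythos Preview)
Your proposal is correct and matches the paper's intent: the corollary is stated there without proof as ``an immediate consequence'' of Theorem~\ref{thm-Lp-inf}, and the linear-difference reduction you carry out is exactly the standard way to read off a comparison principle from a one-sided maximum bound. Your bookkeeping with the $\mp$ convention and the verification that $(\underline{w}_0,f_w,0,0)$ still satisfies $(\cA 2)$ are both accurate.
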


Before proving Theorem \ref{thm-Lp-inf}, we give the following embedding lemma that will be used frequently in what follows.
\begin{lem}\label{lem-embedding}
For $\psi\in L^2(0,T;H^{\eta})\cap C([0,T];L^2)$, one has $\psi\in L^{\frac{2(d+2\eta)}{d}}(Q)$ and 
\begin{align}
\|\psi\|_{L^{\frac{2(d+2\eta)}{d}}(Q)} \leq  \|\psi\|^{\frac{d}{d+2\eta}}_{L^2(0,T;H^{\eta})}  \|\psi\|^{\frac{2\eta}{d+2\eta}}_{C([0,T];L^2)} \label{eq-lem-embedding}
\end{align}
with the positive constant $C$ depending on $d$ and $\eta$.
\end{lem}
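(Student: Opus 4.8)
The plan is to reduce the space--time bound to a pointwise-in-time interpolation inequality and then integrate in $t$. Write $p_0=\frac{2(d+2\eta)}{d}$, and note $p_0\ge 2$ and $\frac{2}{p_0}=\frac{d}{d+2\eta}=:\theta\in(0,1)$. The first step is the classical Sobolev embedding for the Bessel potential space $H^\eta=(1-\Delta)^{-\eta/2}L^2$: when $2\eta<d$ one has $H^\eta(\bR^d)\hookrightarrow L^q(\bR^d)$ with $q=\frac{2d}{d-2\eta}$, and $2\le p_0\le q$ (the inequality $p_0\le q$ being equivalent to $d^2-4\eta^2\le d^2$); when $2\eta=d$ one has $H^\eta\hookrightarrow L^q$ for every finite $q$, and when $2\eta>d$ one has $H^\eta\hookrightarrow L^\infty$. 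In every case, a Lyapunov/H\"older interpolation between $L^2$ and the larger Lebesgue space gives, for a.e.\ $t\in[0,T]$,
\begin{align*}
\|\psi(t)\|_{L^{p_0}(\bR^d)}\le\|\psi(t)\|^{1-\theta}\,\|\psi(t)\|_{L^q(\bR^d)}^{\theta}\le C\,\|\psi(t)\|^{1-\theta}\,\|\psi(t)\|_{\eta}^{\theta},
\end{align*}
where $\theta$ is forced by $\frac{1}{p_0}=\frac{1-\theta}{2}+\frac{\theta}{q}$ to equal $\frac{d}{d+2\eta}$, so that $1-\theta=\frac{2\eta}{d+2\eta}$. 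Equivalently, one may simply invoke the Gagliardo--Nirenberg inequality $\|\varphi\|_{L^{p_0}}\le C\,\|\varphi\|_{\eta}^{\theta}\|\varphi\|^{1-\theta}$ with this $\theta$, whose dimensional balance $\frac{1}{p_0}=\frac12-\frac{\theta\eta}{d}$ is readily verified and which holds for all $d\ge1$, $0<\eta\le1$ since $\theta\in(0,1)$.

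The second step exploits the identity $\theta p_0=2$. Raising the displayed inequality to the power $p_0$ and integrating over $[0,T]$,
\begin{align*}
\int_0^T\|\psi(t)\|_{L^{p_0}(\bR^d)}^{p_0}\,dt\le C\int_0^T\|\psi(t)\|^{(1-\theta)p_0}\,\|\psi(t)\|_{\eta}^{2}\,dt\le C\,\Big(\sup_{t\in[0,T]}\|\psi(t)\|\Big)^{(1-\theta)p_0}\int_0^T\|\psi(t)\|_{\eta}^2\,dt,
\end{align*}
the continuous factor $\sup_t\|\psi(t)\|$ being finite since $\psi\in C([0,T];L^2)$. Taking $p_0$-th roots and using $\frac{(1-\theta)p_0}{p_0}=1-\theta=\frac{2\eta}{d+2\eta}$ together with $\frac{2}{p_0}=\theta=\frac{d}{d+2\eta}$ yields
\begin{align*}
\|\psi\|_{L^{p_0}(Q)}\le C\,\|\psi\|_{L^2(0,T;H^{\eta})}^{\frac{d}{d+2\eta}}\,\|\psi\|_{C([0,T];L^2)}^{\frac{2\eta}{d+2\eta}},
\end{align*}
which is the asserted inequality (with the constant $C$ depending on $d$ and $\eta$) and in particular shows $\psi\in L^{p_0}(Q)$.

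I do not anticipate a genuine obstacle here: the argument rests only on the classical Sobolev embedding for $H^\eta$ and H\"older's inequality. The two points requiring a word of care are the low-dimensional regimes $2\eta\ge d$ (which, since $\eta=2^{-n_0}\le1$, occur only for $d\in\{1,2\}$ with $\eta$ close to $1$), handled by the case split above, and the measurability of $t\mapsto\|\psi(t)\|_{L^{p_0}}$ and $t\mapsto\|\psi(t)\|_{\eta}$, which is immediate from $\psi\in L^2(0,T;H^\eta)\cap C([0,T];L^2)$ and the continuity of the embeddings. If one prefers to bypass the case distinction altogether, quoting the Gagliardo--Nirenberg inequality directly subsumes all dimensions in a single line, and the rest of the proof is unchanged.
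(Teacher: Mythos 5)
Your proof is correct and follows essentially the same route as the paper: a pointwise-in-time Gagliardo--Nirenberg/interpolation inequality $\|\psi(t)\|_{L^{p_0}}\le C\|\psi(t)\|_{\eta}^{\theta}\|\psi(t)\|^{1-\theta}$ with $\theta=\frac{d}{d+2\eta}$, raised to the power $p_0$ so that $\theta p_0=2$, and then integrated in time against the sup of the $L^2$ norm. The only difference is cosmetic: the paper quotes the fractional Gagliardo--Nirenberg inequality directly, whereas you also derive it from the Sobolev embedding of $H^{\eta}$ plus H\"older interpolation with a case split on $2\eta$ versus $d$ --- a harmless and slightly more self-contained variant.
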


\begin{proof}
  By the fractional Gagliard-Nirenberg inequality (see \cite[Corollary 2.3]{Hajaiej-2012-frac-Gargld-Nirenberg} for instance), we have
  \begin{equation*}
    \|\psi(s,\cdot)\|_{L^q}^q
    \leq\  C~
        \|\psi(s,\cdot)\|_{\eta}^{\alpha q}\|\psi(s,\cdot)\|^{q(1-\alpha)},
        \quad \text{a.e.}\ s\in[0,T],
  \end{equation*}
  where $\alpha=d/(d+2\eta)$ and $q=2(d+2\eta)/d$.
  Integrating on $[0,T]$, we obtain
  \begin{equation*}
    \begin{split}
     \int_{Q}|\psi(s,x)|^qdxds
    \leq &\ C~
    \|\psi\|_{\eta}^{2}
        \max_{s\in [0,T]}\|\psi(s,\cdot)\|^{(1-\alpha) q}.
    \end{split}
  \end{equation*}
Therefore,  $\psi\in L^{\frac{2(d+2\eta)}{d}}(Q)$ and there holds \eqref{eq-lem-embedding}.
\end{proof}


For $\lambda>0$ and $z\in\bN_0$, set
\begin{align*}
u_z=(u-\lambda(1-2^{-z}))^+ \quad \textrm{and} \quad U_z=\sup_{t\in[0,T]}\|u_z(t)\|^2+\int_0^T\left( \|u_z(t)\|_{\eta}^2+\sum_{k=1}^{d_1}\|L_ku_z(t)\|^2 \right)dt.
\end{align*}
Obviously, for each $z\in\bN^+$, one has $|D_i u_{z-1}|\geq |D_i u_z| $ for $i=1,\dots,d$, 
\begin{align}
  u_{z-1}\geq  u_z ,\,\, u1_{\{u_z>0\}}=u_z+\lambda(1-2^{-z})1_{\{u_z>0\}} \quad \text{and}\quad 1_{\{u_z>0\}}\leq \left(\frac{2^zu_{z-1}}{\lambda}\right)^{q},\quad\forall \, q>0.\label{relatn-uz}
\end{align}

As an immediate consequence of Lemma \ref{lem-embedding}, there follows 
\begin{cor}
$$\|u_z\|^2_{L^{\frac{2(d+2\eta)}{d}}(Q)} \leq C\, U_z,\quad a.s.
$$
with the constant $C$ depending on $d$ and $\eta$.
\end{cor}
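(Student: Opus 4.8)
The plan is to obtain the estimate as a direct application of Lemma \ref{lem-embedding} to the truncated field $\psi = u_z$, once I have verified that $u_z$ lies in the requisite function spaces almost surely. First I would record the regularity of the weak solution: since $u\in\cS^2(L^2)$, the map $t\mapsto u(t)$ has an a.s.\ continuous $L^2$-valued version, so $u\in C([0,T];L^2)$ a.s.; and Corollary \ref{cor-grad-est} (with $m=0$) gives $u\in\cL^2(H^\eta)$, hence $u\in L^2(0,T;H^\eta)$ a.s. It then remains to transfer these two memberships through the truncation to $u_z=(u-\lambda(1-2^{-z}))^+$.

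The key observation is that, because $\lambda(1-2^{-z})\geq 0$, the map $v\mapsto(v-\lambda(1-2^{-z}))^+$ is a \emph{normal contraction}: it is $1$-Lipschitz and fixes $0$. On the one hand, this gives $\|u_z(t)-u_z(s)\|\leq\|u(t)-u(s)\|$ and $\|u_z(t)\|\leq\|u(t)\|$, so $u_z\in C([0,T];L^2)$ a.s. On the other hand, a normal contraction preserves $H^\eta$ for $\eta=2^{-n_0}\in(0,1]$: when $\eta=1$ this is the chain rule $Du_z=1_{\{u_z>0\}}Du$, and when $\eta\in(0,1)$ it follows from the Gagliardo-seminorm contraction $|u_z(t,x)-u_z(t,y)|\leq|u(t,x)-u(t,y)|$ combined with $\|u_z(t)\|\leq\|u(t)\|$, giving $\|u_z(t)\|_\eta\leq C\|u(t)\|_\eta$. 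Integrating in $t$ yields $u_z\in L^2(0,T;H^\eta)$ a.s., so the hypotheses of Lemma \ref{lem-embedding} are met and, in particular, $U_z<\infty$ a.s.

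With this in hand, Lemma \ref{lem-embedding} applied to $u_z$ reads
$$
\|u_z\|_{L^{\frac{2(d+2\eta)}{d}}(Q)}\leq C\,\|u_z\|_{L^2(0,T;H^\eta)}^{\frac{d}{d+2\eta}}\,\|u_z\|_{C([0,T];L^2)}^{\frac{2\eta}{d+2\eta}}.
$$
I would then square both sides and note that, by the very definition of $U_z$,
$$
\|u_z\|_{L^2(0,T;H^\eta)}^2=\int_0^T\|u_z(t)\|_\eta^2\,dt\leq U_z
\quad\text{and}\quad
\|u_z\|_{C([0,T];L^2)}^2=\sup_{t\in[0,T]}\|u_z(t)\|^2\leq U_z.
$$
Since the two exponents satisfy $\frac{d}{d+2\eta}+\frac{2\eta}{d+2\eta}=1$, the product of the corresponding powers of $U_z$ collapses to $U_z$ itself, yielding $\|u_z\|^2_{L^{\frac{2(d+2\eta)}{d}}(Q)}\leq C\,U_z$ with $C$ depending only on $d$ and $\eta$, as claimed. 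The only genuinely non-formal step is the preservation of the fractional regularity $H^\eta$ under the truncation for $\eta\in(0,1)$; everything else is substitution and the arithmetic of the exponents.
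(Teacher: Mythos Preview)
Your proof is correct and follows exactly the route the paper intends: the corollary is stated there as ``an immediate consequence of Lemma~\ref{lem-embedding}'', and your argument simply unpacks this by applying the lemma to $\psi=u_z$ and observing that each factor on the right is dominated by $U_z^{1/2}$ with exponents summing to one. Your additional care in verifying $u_z\in C([0,T];L^2)\cap L^2(0,T;H^\eta)$ via the normal-contraction property is a welcome detail the paper leaves implicit (the paper effectively relies on the a.s.\ estimate \eqref{eq-ito-uz}, derived just after the corollary, to guarantee $U_z<\infty$).
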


In view of Remark \ref{rmk-ito-forml}, the weak solution $u$ of SPDE \eqref{SPDE} satisfies
\begin{align}
&\|u_z(t)\|^{2}
-\int_0^t \!\!\!\langle u_z(s),\,(-D_i\theta^{ik}u_z+2\beta^k u_z+2h^k)(s)\,dW^k_s\rangle\nonumber
\\
&\leq\|u_z(0)\|^{2}
-(1-\eps)\int_0^t\sum_{k=1}^{d_1}\| L_ku_z(s)\|^2\,ds 
+C_{\eps}\int_0^t \left(\|u_z(s)\|^{2}+\left\langle |u_z|,\,\lambda (1-2^{-z})1_{\{u_z>0\}}\right\rangle (s)\right)\,ds
\nonumber\\
&\quad + \int_0^t \left(\|h(s)1_{\{u_z>0\}}\|^2+ 2\left\langle u_z(s),\,(L_k'g^k+f)(s)\right\rangle\right)\,ds,\quad\text{a.s.,}\,\,\forall \eps \in(0,1).
\label{est-l2-uk-1}
\end{align}
Taking $\eps=1/2$, we have by Gronwall inequality 
\begin{align*}
&\sup_{s\in[0,t]}\|u_z(s)\|^{2}+\int_0^t\sum_{k=1}^{d_1}\| L_ku_z(s)\|^2\,ds
\nonumber
\\
&\leq C\bigg\{
\lambda (1-2^{-z})\int_0^t\langle |u_z|,\,1_{\{u_z>0\}}\rangle (s)\,ds
+\sup_{\tau\in[0,t]}\int_0^{\tau} \!\!\left\langle u_z(s),\,(-D_i\theta^{ik}u_z+2\beta^k u_z+2h^k)(s)\,dW^k_s\right\rangle
\nonumber\\
&\quad 
+ \int_0^t \left(\|h(s)1_{\{u_z>0\}}\|^2+ 2\left|\left\langle u_z(s),\,(L_k'g^k+f^+)(s)\right\rangle\right|\right)\,ds
+\|u_z(0)\|^{2}
\bigg\},\quad\text{a.s.}
\end{align*}
Under condition $(\cH)$, starting from the above estimate and applying Lemma \ref{lem-lie-bracket} iteratively  to elements of  $\bV_0,\dots,\bV_{n_0}$, we get
\begin{align}
&\sup_{s\in[0,t]}\|u_z(s)\|^{2}+\int_0^t\left(\|u_z(s)\|^2_{\eta}+\sum_{k=1}^{d_1}\| L_ku_z(s)\|^2\right)\,ds
\nonumber
\\
&\leq C\bigg\{
\lambda (1-2^{-z})\int_0^t\langle |u_z|,\,1_{\{u_z>0\}}\rangle (s)\,ds
+\sup_{\tau\in[0,t]}\int_0^{\tau} \!\!\left\langle u_z(s),\,(-D_i\theta^{ik}u_z+2\beta^k u_z+2h^k)(s)\,dW^k_s\right\rangle
\nonumber\\
&\quad 
+ \int_0^t \left(\|h(s)1_{\{u_z>0\}}\|^2+ 2\left|\left\langle u_z(s),\,(L_k'g^k+f^+)(s)\right\rangle\right|\right)\,ds
+\|u_z(0)\|^{2}
\bigg\}
,\quad\text{a.s.}\label{eq-ito-uz}
\end{align}

Set 
$$
M_z(t)=\int_0^t\left\langle  u_z(s),\,(-D_i\theta^{ik}u_z+2\beta^k u_z+2h^k)(s)\,dW^k_s  \right\rangle,\quad t\in [0,T].
$$
The proof of Theorem \ref{thm-Lp-inf} is started from the iteration inequality of the following lemma.
\begin{lem}\label{lem-iteration}
Assume $\lambda \geq 2 \Lambda_{\bar p,\infty}^+ >1 $. For the solution of SPDE \eqref{SPDE}, there exists a positive constant $N$ such that for any $z\in\bN^+$,
\begin{align}
U_z\leq \frac{N^z}{\lambda^{2\alpha_0}} \left(  U_{z-1}\right)^{1+\alpha_0} + N\sup_{t\in[0,T]} M_z(t), \quad\text{a.s.}\label{itrn-schem}
\end{align}
where $$
0<\alpha_0:=\frac{(\bar{p}-2\eta)(d+2\eta)}{2\bar p d}-\frac{1}{2}.
$$
\end{lem}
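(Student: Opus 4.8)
The plan is to start from the Itô-type inequality \eqref{eq-ito-uz} (taken at a generic time $t$, and then with $\sup$ over $t\in[0,T]$ added after the deterministic terms are bounded), and to estimate each of the three ``forcing'' terms on the right-hand side by a power of $U_{z-1}$ carrying a gain factor $\lambda^{-2\alpha_0}N^z$. The key structural observation is the last relation in \eqref{relatn-uz}: on the set $\{u_z>0\}$ one has $u_{z-1}>\lambda 2^{-z}$, so $1_{\{u_z>0\}}\leq (2^z u_{z-1}/\lambda)^q$ for every $q>0$, and we are free to choose $q$ differently in each term to trade integrability of $u_{z-1}$ against powers of $\lambda$ and $2^z$. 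All three terms will ultimately be controlled by $\|u_{z-1}\|^{2}_{L^{\frac{2(d+2\eta)}{d}}(Q)}\leq C\,U_{z-1}$ via the Corollary to Lemma \ref{lem-embedding}, and by the $L^\infty(\Omega)$-bounds on $(f,g,h)$ built into $\Lambda_{\bar p,\infty}^{+}$ through Hölder's inequality in $(t,x)$.

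First I would handle the term $\lambda(1-2^{-z})\int_0^t\langle|u_z|,1_{\{u_z>0\}}\rangle\,ds\leq \lambda\int_Q u_z\,1_{\{u_z>0\}}$. Bounding $u_z\le u_{z-1}$ and $1_{\{u_z>0\}}\le (2^z u_{z-1}/\lambda)^{q}$ with a suitable $q$, this becomes $\lambda^{1-q}2^{zq}\int_Q u_{z-1}^{1+q}$; choosing $q$ so that $1+q$ matches (or is dominated, using $|Q|<\infty$ and Hölder, by) the exponent $\frac{2(d+2\eta)}{d}$ and keeping $\lambda>1$ so that $\lambda^{1-q}\le \lambda^{-2\alpha_0}$ for the relevant $q$, we get a bound of the form $N^z\lambda^{-2\alpha_0}(U_{z-1})^{1+\alpha_0}$. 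Next, for $\int_Q \|h\,1_{\{u_z>0\}}\|^2 = \int_Q |h|^2 1_{\{u_z>0\}}$, I split $|h|^2 1_{\{u_z>0\}}\le |h|^2 (2^z u_{z-1}/\lambda)^{q}$, apply Hölder in $(t,x)$ to separate $|h|^2$ (absorbed into the $L^{\bar p/\eta}$ and $L^{\frac{2\bar p(d+2\eta)}{(\bar p+d+2\eta)\eta}}$ norms of $h$, i.e. into $\Lambda^{+}_{\bar p,\infty}$) from $u_{z-1}^{q}$ (absorbed into $\|u_{z-1}\|_{L^{\frac{2(d+2\eta)}{d}}(Q)}$, i.e. into $U_{z-1}$), and again pick $q$ so the $u_{z-1}$-exponent lands at $(1+\alpha_0)$ after using $U_z\le\dots$; the hypothesis $\lambda\ge 2\Lambda^{+}_{\bar p,\infty}$ lets me absorb the $\Lambda^{+}_{\bar p,\infty}$-factors into constants and the $\lambda^{-q}$ into $\lambda^{-2\alpha_0}$. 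The term $2\int_Q|\langle u_z,(L_k'g^k+f^+)\rangle|$ is treated the same way: integrate $L_k'$ by parts onto $u_z$ (producing $\langle L_k u_z + c_k u_z, g^k\rangle$, noting $|D_i u_z|\le|D_i u_{z-1}|$), then multiply through by $1_{\{u_z>0\}}\le(2^z u_{z-1}/\lambda)^q$, use Cauchy--Schwarz to peel off the $\sum_k\|L_k u_z\|^2$ part with a small coefficient (absorbed into the left side of \eqref{eq-ito-uz}), and Hölder to split off $\|(f^+,g)\|$-norms into $\Lambda^{+}_{\bar p,\infty}$ and the remaining $u_{z-1}$-power into $U_{z-1}$.

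The martingale term $M_z(t)$ is simply carried along: taking $\sup_{\tau\in[0,t]}$ on the right of \eqref{eq-ito-uz} and then on the left after all deterministic estimates are in place gives exactly the additive $N\sup_{t\in[0,T]}M_z(t)$ in \eqref{itrn-schem}. The bookkeeping that determines $\alpha_0$ is where care is needed: the exponent $\alpha_0=\frac{(\bar p-2\eta)(d+2\eta)}{2\bar p d}-\frac12$ is precisely what comes out of optimizing the Hölder split between the fixed integrability $\bar p/\eta$ (and $\frac{2\bar p(d+2\eta)}{(\bar p+d+2\eta)\eta}$) of $(f,g,h)$ and the self-improving integrability $\frac{2(d+2\eta)}{d}$ of $u_{z-1}$, so that the surplus power of $u_{z-1}$ beyond $2$ equals $2\alpha_0$ and matches the surplus power of $\lambda^{-1}$; the condition $\bar p>d+2\eta$ in $(\cA 2)$ is exactly what makes $\alpha_0>0$. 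The main obstacle is therefore not any single estimate but getting the three different choices of $q$ to be mutually consistent with one common $\alpha_0$, and in particular verifying that the worst of the three terms still yields a strictly positive gain exponent $\alpha_0$; the $h$-term, which requires controlling $h$ in \emph{two} different Lebesgue norms simultaneously, is the one that forces the precise form of $\Lambda^{+}_{\bar p,\infty}$ and is the delicate point to check.
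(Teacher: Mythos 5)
Your proposal follows essentially the same route as the paper's proof: start from \eqref{eq-ito-uz}, exploit $1_{\{u_z>0\}}\leq(2^zu_{z-1}/\lambda)^{q}$ with a term-dependent exponent $q$, reduce everything via H\"older/interpolation to $\|u_{z-1}\|^{2+2\alpha_0}_{L^{2(d+2\eta)/d}(Q)}\leq C(U_{z-1})^{1+\alpha_0}$ using the embedding corollary, integrate $L_k'$ by parts onto $u_z$ for the $g$-term, absorb the $(f,g,h)$-norms and the surplus powers of $\lambda^{-1}$ using $\lambda\geq 2\Lambda^+_{\bar p,\infty}>1$, and carry the martingale along additively. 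Your diagnosis of the bookkeeping (that $\bar p>d+2\eta$ is exactly what makes $\alpha_0>0$, and that the $h$-term is the one forcing the two distinct Lebesgue norms of $h$ inside $\Lambda^+_{\bar p,\infty}$) matches the paper.

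Two small points to tighten. First, you account for only three deterministic terms on the right of \eqref{eq-ito-uz} and never address $\|u_z(0)\|^{2}$; the iteration inequality has no additive constant precisely because $\underline u_0^{+}\leq\Lambda^+_{\bar p,\infty}\leq\lambda/2\leq\lambda(1-2^{-z})$ forces $u_z(0)\equiv 0$ for every $z\in\bN^+$ — this is a one-line consequence of the same hypothesis you already invoke, but it must be said. Second, in the first term you suggest choosing $q$ so that $1+q$ \emph{matches} $\tfrac{2(d+2\eta)}{d}$; that would give the exponent $1+\tfrac{2\eta}{d}>1+\alpha_0$ on $U_{z-1}$, which cannot be dominated by $(U_{z-1})^{1+\alpha_0}$ almost surely. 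The correct choice (and the paper's) is $q=1+2\alpha_0$, so that $\lambda^{1-q}=\lambda^{-2\alpha_0}$ and $2+2\alpha_0<\tfrac{2(d+2\eta)}{d}$, after which your parenthetical alternative — H\"older against the finite measure of $Q$, or interpolation between $L^2(Q)$ and $L^{2(d+2\eta)/d}(Q)$ — lands exactly on $(U_{z-1})^{1+\alpha_0}$.
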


\begin{proof}


We estimate each item involved in relation \eqref{eq-ito-uz}. Since $\bar{p}>d+2\eta$, basic calculations yield that $2<2+4\alpha_0<\frac{2(d+2\eta)}{d}$. Then, it holds that
\begin{align*}
&\lambda (1-2^{-z})\int_0^T\langle |u_z|,\,1_{\{u_z>0\}}\rangle (s)\,ds
\\
&\leq \lambda (1-2^{-z})\int_0^T\left\langle |u_{z-1}|,\,\left(\frac{2^zu_{z-1}}{\lambda}\right)^{1+2\alpha_0} \right\rangle (s)\,ds
\\
&= \frac{(1-2^{-z})2^{(1+2\alpha_0)z}}{\lambda^{2\alpha_0}}  \|u_{z-1}\|_{L^{2+2\alpha_0}(Q)}^{2+2\alpha_0}
\\
&\leq
\frac{(1-2^{-z})2^{(1+2\alpha_0)z}}{\lambda^{2\alpha_0}}  \|u_{z-1}\|_{L^{\frac{2(d+2\eta)}{d}}(Q)}^{(2+2\alpha_0)\eps} \|u_{z-1}\|_{L^{2}(Q)}^{(2+2\alpha_0)(1-\eps)}
\\
&\leq 
\frac{C(1-2^{-z})2^{(1+2\alpha_0)z}}{\lambda^{2\alpha_0}}  \left(U_{k-1}\right)^{{1+\alpha_0}},\quad\text{a.s.}
\end{align*}
where by Lyapunov's inequality, $\eps\in(0,1)$ is chosen to satisfy
$$
\frac{1}{2+2\alpha_0}=\frac{d\eps}{2(d+2\eta)}+\frac{1-\eps}{2}. 
$$

Furthermore, we have
\begin{align*}
&\int_0^T \langle u_z,\,f^+\rangle (s) \,ds\\
&\leq \|u_z\|_{L^{\frac{2(d+2\eta)}{d}}(Q)}  \|f^+\|_{L^{\frac{\bar p (d+2\eta)}{(\bar p + d+2\eta)\eta}}(Q)}
\left(\int_{Q}1_{\{u_z>0\}}\,dxds\right)^{\frac{1}{2}-\frac{\eta}{\bar p}}\\
&\leq 
\|u_z\|_{L^{\frac{2(d+2\eta)}{d}}(Q)}  \|f^+\|_{L^{\frac{\bar p (d+2\eta)}{(\bar p + d+2\eta)\eta}}(Q)}
\left(\int_{Q}
\left|\frac{2^zu_{z-1}}{\lambda}\right|^{\frac{2(d+2\eta)}{d}}
\,dxds\right)^{\frac{1}{2}-\frac{\eta}{\bar p}}
\\
&\leq
\left(\frac{2^z}{\lambda}\right)^{1+2\alpha_0}
\|f^+\|_{L^{\frac{\bar p (d+2\eta)}{(\bar p + d+2\eta)\eta}}(Q)}
\|u_{z-1}\|_{L^{\frac{2(d+2\eta)}{d}}(Q)}^{2+2\alpha_0}  
\\
&\leq C
\left(\frac{2^z}{\lambda}\right)^{1+2\alpha_0}\|f^+\|_{L^{\frac{\bar p (d+2\eta)}{(\bar p + d+2\eta)\eta}}(Q)}
(U_{z-1})^{1+\alpha_0},  \quad\text{a.s.}
\end{align*}
and
\begin{align*}
&\int_0^T|\langle u_z,\,L_k'g^k \rangle (s)|\,ds
\\
&=\int_0^T|\langle L_ku_z,\,g^k \rangle (s)|\,ds
\\
&\leq 
\|L_ku_z\|_{L^2(Q)}\left(\int_Q g^21_{\{u_z>0\}}\,dxds\right)^{\frac{1}{2}}\\
&
\leq 
\|L_ku_z\|_{L^2(Q)} \|g\|_{L^{\frac{\bar p}{\eta}}(Q)}  \left(\int_Q 1_{\{u_z>0\}}\,dxds\right)^{\frac{1}{2}-\frac{\eta}{\bar p}}\\
&\leq
\left(\frac{2^z}{\lambda}\right)^{1+2\alpha_0}  \|g\|_{L^{\frac{\bar p}{\eta}}(Q)}  \|L_ku_{z-1}\|_{L^2(Q)} \|u_{z-1}\|_{L^{\frac{2(d+2\eta)}{d}}(Q)}^{1+2\alpha_0}
\\
&\leq C
\left(\frac{2^z}{\lambda}\right)^{1+2\alpha_0}\|g\|_{L^{\frac{\bar p}{\eta}}(Q)}
(U_{z-1})^{1+\alpha_0},\quad\text{a.s.}
\end{align*}

Let $q=\frac{\bar{p}(d+2\eta)}{\eta(\bar{p}+d+2\eta)}$ and $\tilde q = \frac{q}{q-1}$. There follows $\frac{2(d+2\eta)}{d\tilde{q}}=2+2\alpha_0$ and thus
\begin{align*}
&\int_0^T \|h(s) 1_{\{u_z>0\}} \|^2ds\\
&\leq \|h\|_{L^{2q}(Q)}^{2}\left(  \int_Q 1_{\{u_z>0\}}dxds  \right)^{\frac{1}{\tilde q }}\\
&\leq \|h\|_{L^{2q}(Q)}^{2}\left( \int_Q \left(  \frac{2^zu_{z-1}}{\lambda}  \right)^{\frac{2(d+2\eta)}{d}}dxds  \right)^{\frac{1}{\tilde q }}\\
&= \left(\frac{2^z}{\lambda}\right)^{2+2\alpha_0}  \|h\|_{L^{2q}(Q)}^{2}
  \| u_{z-1}  \|_{L^{\frac{2(d+2\eta)}{d}}(Q)}  ^{2+2\alpha_0}
\\
&\leq \frac{C2^{(2+2\alpha_0)z}}{\lambda^{2+2\alpha_0}}  \|h\|_{L^{2q}(Q)}^{2}
\left(  U_{z-1}   \right)^{1+\alpha_0},\quad\text{a.s.}
\end{align*}


Since $\lambda\geq 2 \Lambda^+_{\bar p,\infty}$, it follows that $u_z(0)\equiv 0$ for any $z\in\bN^+$.
Choosing $N$ to be big enough, we have by relation \eqref{eq-ito-uz},
\begin{align*}
U_z\leq \frac{N^z}{\lambda^{2\alpha_0}} \left(  U_{z-1}\right)^{1+\alpha_0} + N\sup_{t\in[0,T]} M_z(t),\quad \text{a.s.}
\end{align*}
\end{proof}

 Next, let us deal with  the martingale part $M_z(\cdot)$ in the iteration inequality \eqref{itrn-schem}. We shall prove that $M_z(\cdot)$ is comparable with $\left(U_{z-1}\right)^{1+\alpha_0}$, and the techniques are generalized from \cite{hsu-2013-Stoch-DeGiorgi} for the superparabolic cases.

\begin{lem}\label{lem-est-prob}
Let $\lambda\geq \Lambda^+_{\bar p,\infty}$. There exists $N\in(1,\infty)$ such that for any $\kappa,\zeta\in(0,\infty)$, 
$$
\bP\left(\left\{   
\sup_{t\in[0,T]} M_z(t) \geq \kappa\zeta,\,\left(U_{z-1}\right)^{1+\alpha_0} \leq \zeta
\right\}\right)
\leq  \exp\left\{-\frac{\kappa^2\lambda^{4\alpha_0}}{2N^z}\right\},\quad \forall\,z\in\bN^+.
$$
\end{lem}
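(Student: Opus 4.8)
The plan is to combine the classical exponential (Bernstein-type) inequality for continuous martingales with a pathwise bound on the quadratic variation of $M_z$. First, $M_z$ is a continuous local martingale with $M_z(0)=0$: by Corollary \ref{cor-grad-est} (case $m=0$), assumption $(\cA 2)$ and $u\in\cS^2(L^2)$, the integrand $s\mapsto\la u_z(s),\,(-D_i\theta^{ik}u_z+2\beta^ku_z+2h^k)(s)\ra$ is progressively measurable and a.s.\ square-integrable on $[0,T]$. For such $M_z$ the process $\exp\!\big(\mu M_z(t)-\tfrac{\mu^2}{2}\la M_z\ra_t\big)$ is a nonnegative supermartingale for each $\mu>0$, so Doob's maximal inequality yields, for all $a,b>0$,
\begin{align*}
\bP\Big(\sup_{t\in[0,T]}M_z(t)\ge a,\ \la M_z\ra_T\le b\Big)\le\exp\Big(-\frac{a^2}{2b}\Big)
\end{align*}
(take $\mu=a/b$; on $\{\la M_z\ra_T\le b\}$ one has $\mu M_z(t)-\tfrac{\mu^2}{2}\la M_z\ra_t\ge\mu M_z(t)-\tfrac{\mu^2 b}{2}$ for $t\le T$). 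Hence it suffices to prove the almost sure estimate
\begin{align}\label{qv-est}
\la M_z\ra_T\le\frac{N^z}{\lambda^{4\alpha_0}}\big(U_{z-1}\big)^{2(1+\alpha_0)},\qquad z\in\bN^+,
\end{align}
for a suitable $N\in(1,\infty)$; indeed, on $\{(U_{z-1})^{1+\alpha_0}\le\zeta\}$ this gives $\la M_z\ra_T\le N^z\lambda^{-4\alpha_0}\zeta^2=:b$, and the conclusion follows from the displayed inequality with $a=\kappa\zeta$.

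To prove \eqref{qv-est}, write $\la M_z\ra_T=\int_0^T\sum_{k=1}^{d_1}\big|\la u_z(s),\,(-D_i\theta^{ik}u_z+2\beta^ku_z+2h^k)(s)\ra\big|^2ds$. Since $\theta^{ik},D_i\theta^{ik},\beta^k$ are bounded and $u_z=u_z1_{\{u_z>0\}}$, each summand is $\le C\big(\|u_z(s)\|^4+\|u_z(s)\|^2\|h(s)1_{\{u_z>0\}}\|^2\big)$, whence
\begin{align*}
\la M_z\ra_T\le C\int_0^T\|u_z(s)\|^4ds+C\int_0^T\|u_z(s)\|^2\|h(s)1_{\{u_z>0\}}\|^2ds.
\end{align*}
Both integrals are handled by the same De Giorgi-type manipulations as in the proof of Lemma \ref{lem-iteration}, applied now to squared quantities. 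For the first, bound one copy of $\|u_z(s)\|^2=\int u_z^21_{\{u_z>0\}}dx$ by $(2^z/\lambda)^{4\alpha_0}\|u_{z-1}(s)\|_{L^{2+4\alpha_0}(\bR^d)}^{2+4\alpha_0}$ via \eqref{relatn-uz}, bound the other by $\sup_s\|u_{z-1}(s)\|^2\le U_{z-1}$, and estimate $\int_0^T\|u_{z-1}(s)\|_{L^{2+4\alpha_0}}^{2+4\alpha_0}ds\le C(U_{z-1})^{1+2\alpha_0}$ by interpolating $L^{2+4\alpha_0}$ between $L^2$ and $L^{2(d+2\eta)/d}$ and then combining $\sup_s\|u_{z-1}(s)\|^2\le U_{z-1}$ with Lemma \ref{lem-embedding} (in the form $\|u_{z-1}\|_{L^{2(d+2\eta)/d}(Q)}^2\le CU_{z-1}$) and Hölder in time; this interpolation is admissible exactly because $2+4\alpha_0<2(d+2\eta)/d$, the same numerical fact used in Lemma \ref{lem-iteration}, guaranteed by $\bar p>d+2\eta$. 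For the second integral, apply \eqref{relatn-uz} with exponent $2+4\alpha_0$ to $\|h(s)1_{\{u_z>0\}}\|^2=\int h^21_{\{u_z>0\}}dx$, split $\int h^2u_{z-1}^{2+4\alpha_0}dx\le\|h(s)\|_{L^{\bar p/\eta}}^2\|u_{z-1}(s)\|_{L^{2(d+2\eta)/d}}^{2+4\alpha_0}$ by Hölder in $x$, then apply Hölder in $t$ (extracting $\sup_s\|u_z(s)\|^2$ from the $u_z$-factor, the $L^{\bar p/\eta}(Q)$-norm from $h$, and the $L^{2(d+2\eta)/d}(Q)$-norm from $u_{z-1}$), and use $\sup_s\|u_z(s)\|^2\le U_{z-1}$, Lemma \ref{lem-embedding}, and $\|h\|_{L^{\bar p/\eta}(Q)}\le\Lambda^+_{\bar p,\infty}\le\lambda$. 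In both cases the $\lambda^2$ produced by $\|h\|_{L^{\bar p/\eta}(Q)}^2$ combines with $(2^z/\lambda)^{2+4\alpha_0}$ to give $\lambda^{-4\alpha_0}$, and the $U_{z-1}$-powers add up to $2(1+\alpha_0)$, which is \eqref{qv-est}.

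The main difficulty is not the probabilistic step, where the exponential martingale inequality is entirely standard, but the pathwise estimate \eqref{qv-est} with the precise exponents $4\alpha_0$ of $\lambda$ and $2(1+\alpha_0)$ of $U_{z-1}$. The point is that $\{(U_{z-1})^{1+\alpha_0}\le\zeta\}$ depends on the whole trajectory of $u_{z-1}$ on $[0,T]$ and is not a stopping-time event, so $M_z$ cannot be localized at it; the Bernstein inequality can only be fed through an $\omega$-wise control of $\la M_z\ra_T$ by $\big((U_{z-1})^{1+\alpha_0}\big)^2$. Matching these exponents is what dictates the choice of the exponent $2+4\alpha_0$ in \eqref{relatn-uz} and of the interpolation exponents; the strict inequality $\bar p>d+2\eta$ supplies exactly the room required, as in Lemma \ref{lem-iteration}.
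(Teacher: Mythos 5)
Your proof is correct and follows essentially the same route as the paper: the heart of both arguments is the pathwise bound $\langle M_z\rangle_T\le N^z\lambda^{-4\alpha_0}\left(U_{z-1}\right)^{2+2\alpha_0}$ a.s., obtained by exactly the same manipulations you describe (the indicator bound from \eqref{relatn-uz}, H\"older in $x$ and $t$, interpolation of $L^{2+4\alpha_0}$ between $L^2$ and $L^{2(d+2\eta)/d}$ via Lemma \ref{lem-embedding}, and absorbing $\|h\|^2_{L^{\bar p/\eta}(Q)}\le\lambda^2$ into the $\lambda$-power). The only difference is the final probabilistic step: you invoke the exponential supermartingale (Bernstein) inequality, whereas the paper time-changes $M_z$ into a Brownian motion and uses the reflection principle with the Gaussian tail bound; both are standard and yield the identical estimate $\exp\{-\kappa^2\zeta^2/(2b)\}$ with $b=N^z\zeta^2\lambda^{-4\alpha_0}$.
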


\begin{proof}
First, we have
\begin{align*}
\langle M_z \rangle_T
&=\sum_{k=1}^{d_1}\int_0^T 
\left|
\left\langle  u_z,\,(-D_i\theta^{ik}u_z+2\beta^k u_z+2h^k)(s)\right\rangle
\right|^2ds\\
&\leq
C\int_0^T \left( \|u_z\|^4 + \|u_z\|^2\|h1_{\{u_z>0\}}\|^2   \right)ds,\quad \text{a.s.}
\end{align*}
with the constant $C$ being independent of $z$. On the other hand, we have
\begin{align*}
&\int_0^T \|u_z\|^2\|h1_{\{u_z>0\}}\|^2ds\\
&\leq \sup_{s\in[0,T]}\|u_z(s)\|^2\int_0^T \|h1_{\{u_z>0\}}\|^2ds\\
&\leq \sup_{s\in[0,T]}\|u_z(s)\|^2 \|h\|_{L^{\frac{\bar p}{\eta}}(Q)}^2 
\left( \int_{Q} 1_{\{u_z>0\}} dxds\right)^{1-\frac{2\eta}{\bar p}}
\\
&\leq 
\|h\|_{L^{\frac{\bar p}{\eta}}(Q)}^2 \sup_{s\in[0,T]}\|u_z(s)\|^2 
\left(\int_{Q}
\left|\frac{2^zu_{z-1}}{\lambda}\right|^{\frac{2(d+2\eta)}{d}}
\,dxds\right)^{1-\frac{2\eta}{\bar p}}
\\
&=\left(\frac{2^z}{\lambda}\right)^{2+4\alpha_0}
\|h\|_{L^{\frac{\bar p}{\eta}}(Q)}^2 \sup_{s\in[0,T]}\|u_z(s)\|^2 
\|u_{z-1}\|^{2+4\alpha_0}_{L^{\frac{2(d+2\eta)}{d}}(Q)}
\\
&
\leq C
\left(\frac{2^z}{\lambda}\right)^{2+4\alpha_0}
\|h\|_{L^{\frac{\bar p}{\eta}}(Q)}^2 
\left(  U_{z-1}   \right)^{2+2\alpha_0},\quad \text{a.s.}
\end{align*}
and 
\begin{align*}
&\int_0^T \|u_z(s)\|^4ds
\\
&\leq
 \sup_{s\in[0,T]}\|u_z(s)\|^2\int_0^T \|u_z\|^2ds\\
&\leq 
 \sup_{s\in[0,T]}\|u_{z}(s)\|^2 
\int_{Q}\left|u_z\right|^2
\left|\frac{2^zu_{z-1}}{\lambda}\right|^{4\alpha_0}
\,dxds
\\
&\leq
\left(\frac{2^z}{\lambda}\right)^{4\alpha_0}
 \sup_{s\in[0,T]}\|u_z(s)\|^2 
\|u_{z-1}\|^{2+4\alpha_0}_{2+4\alpha_0}
\\
&\leq  C
\left(\frac{2^z}{\lambda}\right)^{4\alpha_0}
\left(  U_{z-1}   \right)^{2+2\alpha_0},\quad\text{a.s.}
\end{align*}
Therefore, there exists $N\in (1,\infty)$ such that for any $z\in\bN^+$,
\begin{align}
\langle M_z \rangle_T \leq C \left\{
\left(\frac{2^z}{\lambda}\right)^{4\alpha_0}
+\left(\frac{2^z}{\lambda}\right)^{2+4\alpha_0} \|h\|_{L^{\frac{\bar p}{\eta}}(Q)}^2
\right\}
\left(  U_{z-1}   \right)^{2+2\alpha_0}
\leq \frac{N^z}{\lambda^{4\alpha_0}} \left(  U_{z-1}   \right)^{2+2\alpha_0} ,\quad \text{a.s.}\label{eq-martg}
\end{align}
with the constant $C$ being independent of $z$.

In view of relation \eqref{eq-martg}, $\left(U_{z-1}\right)^{1+\alpha_0} \leq \zeta$ implies that $\langle M_z \rangle_T \leq \gamma := \frac{N^z\zeta^2}{\lambda^{4\alpha_0}}$. Note that there exists a Brownian motion $B$ such that $M_t=B_{\langle M \rangle_t}$. Hence,
\begin{align*}
\bP\left(\left\{   
\sup_{t\in[0,T]} M_z(t) \geq \kappa\zeta,\,\left(U_{z-1}\right)^{1+\alpha_0}\leq \zeta
\right\}\right)
&\leq \bP\left(\left\{   
\sup_{t\in[0,T]} M_z(t) \geq \kappa\zeta,\,\langle M_z \rangle_T \leq \gamma
\right\}\right)
\\
&\leq \bP\left(\left\{   
\sup_{t\in[0,\gamma]} B_t \geq \kappa\zeta
\right\}\right)\\
\text{(by the reflection principle) }
&=2 \bP\left(\left\{   
 B_{\gamma} \geq \kappa\zeta
\right\}\right)
\\
&\leq  \exp\left\{-\frac{\kappa^2\zeta^2}{2\gamma}\right\}=\exp\left\{-\frac{\kappa^2\lambda^{4\alpha_0}}{2N^z}\right\},
\end{align*}
which completes the proof. 
\end{proof}

Combining the iteration inequality \eqref{itrn-schem} and the estimate on martingale part $M_z(\cdot)$, we shall estimate the tail probability of $\|u^{+}\|_{L^{\infty}(Q)}$.

\begin{prop}\label{lem-uplus}
There exist $\theta_0\in(0,1)$ and $\lambda_0\in (1,\infty)$ such that for any $\lambda\geq \lambda_0 $,
\begin{align}
\bP\left(\left\{
\|u^+\|_{L^{\infty}(Q)} >\lambda, \,
U_0 \leq \lambda^{2\theta_0}
\right\}\right)
\leq 2\exp\left\{-\lambda^{2\alpha_0} \right\}. \label{est-lem-uplus}
\end{align}
\end{prop}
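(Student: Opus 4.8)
The plan is to run the stochastic De Giorgi iteration, combining the deterministic iteration inequality from Lemma \ref{lem-iteration} with the martingale tail bound from Lemma \ref{lem-est-prob}. Recall that by the corollary to Lemma \ref{lem-embedding} we have $\|u_z\|^2_{L^{2(d+2\eta)/d}(Q)}\le C\,U_z$, so that if $U_z\to 0$ as $z\to\infty$ (on an event of controlled probability), then on that event $(u-\lambda)^+=\lim_z u_z$ vanishes in $L^{2(d+2\eta)/d}(Q)$, hence $\|u^+\|_{L^\infty(Q)}\le\lambda$. Thus the goal reduces to showing: for $\lambda$ large and on the event $\{U_0\le\lambda^{2\theta_0}\}$, the sequence $U_z$ decays to $0$ with probability at least $1-2\exp\{-\lambda^{2\alpha_0}\}$.

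First I would set up the event on which the iteration ``closes.'' Fix a decreasing sequence of thresholds, the natural choice being $\zeta_z=\lambda^{2\theta_0}\Gamma^{-z}$ for a geometric factor $\Gamma>1$ to be pinned down (so that $U_0\le\lambda^{2\theta_0}=\zeta_0$ by hypothesis). Define the bad events $A_z=\{\sup_{t}M_z(t)\ge \kappa_z\zeta_{z-1}^{1+\alpha_0},\ (U_{z-1})^{1+\alpha_0}\le\zeta_{z-1}^{1+\alpha_0}\}$, apply Lemma \ref{lem-est-prob} with $\zeta=\zeta_{z-1}^{1+\alpha_0}$ and a suitable $\kappa_z$, and let $\Omega_\lambda=\Omega\setminus\bigcup_{z\ge1}A_z$. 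The martingale bound gives $\bP(A_z)\le\exp\{-\kappa_z^2\lambda^{4\alpha_0}/(2N^z)\}$; choosing $\kappa_z$ so that $\kappa_z^2\lambda^{4\alpha_0}/(2N^z)\gtrsim \lambda^{2\alpha_0}+z\log 2$ (e.g.\ $\kappa_z=N^z\lambda^{-\alpha_0}$ up to constants) makes $\sum_z\bP(A_z)\le 2\exp\{-\lambda^{2\alpha_0}\}$ for $\lambda\ge\lambda_0$. This is where the precise bookkeeping of the exponents lives, but it is standard.

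Next, on $\Omega_\lambda$, I would prove by induction that $U_z\le\zeta_z$ for all $z\ge0$. The base case is the hypothesis. For the inductive step, feed $U_{z-1}\le\zeta_{z-1}$ into Lemma \ref{lem-iteration}: the deterministic term is bounded by $\frac{N^z}{\lambda^{2\alpha_0}}\zeta_{z-1}^{1+\alpha_0}$, and since we are off $A_z$ the martingale term satisfies $N\sup_t M_z(t)<N\kappa_z\zeta_{z-1}^{1+\alpha_0}$. Both contributions are of the form (constant depending on $z$)$\times\,\zeta_{z-1}^{1+\alpha_0}/\lambda^{2\alpha_0}$, and using $\zeta_{z-1}=\lambda^{2\theta_0}\Gamma^{-(z-1)}$ one gets $U_z\le C\,\Gamma^{z}\,\lambda^{-2\alpha_0}\,(\lambda^{2\theta_0}\Gamma^{-(z-1)})^{1+\alpha_0}$; requiring this to be $\le\zeta_z=\lambda^{2\theta_0}\Gamma^{-z}$ amounts to $\lambda^{2\theta_0\alpha_0-2\alpha_0}\,\Gamma^{-z\alpha_0}\,(\text{poly in }z,\Gamma)\le 1$. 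This forces the choice $\theta_0<1$ (so that the power of $\lambda$ is negative) and $\Gamma$ large enough to absorb the $z$-dependent factors; one then takes $\lambda_0$ large enough that the inequality holds for all $z$ simultaneously. Hence $U_z\le\lambda^{2\theta_0}\Gamma^{-z}\to0$ on $\Omega_\lambda$, giving $\|u^+\|_{L^\infty(Q)}\le\lambda$ there, i.e.\ $\{\|u^+\|_{L^\infty(Q)}>\lambda,\ U_0\le\lambda^{2\theta_0}\}\subset\bigcup_z A_z$, which has probability $\le2\exp\{-\lambda^{2\alpha_0}\}$.

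The main obstacle is calibrating the three free parameters $\theta_0$, $\Gamma$, $\kappa_z$ (hence $\lambda_0$) so that \emph{both} the probabilistic estimate $\sum_z\bP(A_z)\le2\exp\{-\lambda^{2\alpha_0}\}$ \emph{and} the deterministic induction $U_z\le\zeta_z$ go through with the same constants; the tension is that larger $\kappa_z$ helps the induction but hurts the probability, and the geometric growth $N^z$ in Lemma \ref{lem-est-prob} must be beaten by $\Gamma$-decay without spoiling the $\lambda$-power. Once the exponent inequalities $2\theta_0\alpha_0<2\alpha_0$ and the $z$-summability are arranged, everything else is the routine De Giorgi bookkeeping already prepared by Lemmas \ref{lem-iteration} and \ref{lem-est-prob}. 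I would also note that the statement is only about $u^+$; the bound for $u^-$ follows by applying the same argument to $-u$, which solves an SPDE of the same structural type.
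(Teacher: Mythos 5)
Your proposal is correct and follows essentially the same route as the paper: a geometric threshold sequence $\zeta_z=\lambda^{2\theta_0}\nu^{-z}$, the iteration inequality of Lemma \ref{lem-iteration} fed into the martingale tail bound of Lemma \ref{lem-est-prob}, and a union bound over the first level at which the iteration fails (your induction on the complement of the bad events is just the contrapositive of the paper's decomposition into $(A_z)^c\cap A_{z-1}$). The calibration you defer is carried out in the paper with $\theta_0=\tfrac14$, $\nu=(2N+1)^{1/\alpha_0}$ and $\kappa_z\geq (2N+1)^z\lambda^{-\alpha_0}$, which indeed resolves the tension you identify.
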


\begin{proof}
For $z\in\bN_0$, set 
$$
A_z=\left\{ 
U_z\leq \frac{\lambda^{2\theta_0}}{\nu^z} 
\right\},
$$
with the parameter $\nu>1$ waiting to be determined later. 
Observe that 
\begin{align*}
\left\{
\|u^+\|_{L^{\infty}(Q)} >\lambda, \,
U_0 \leq \lambda^{2\theta_0}
\right\}
\subset 
\cup_{z\in\bN_0} \left(A_z\right)^c \cap A_0
\subset
\cup_{z\in \bN^+} \left(A_z\right)^c\cap A_{z-1}
\end{align*}
which implies that 
\begin{align}
\bP\left(\left\{
\|u^+\|_{L^{\infty}(Q)} >\lambda, \,
U_0 \leq \lambda^{2\theta_0}
\right\}\right)
\leq \sum_{z\in \bN^+} 
\bP \left(\left(A_z\right)^c\cap A_{z-1}\right). \label{eq-lem-prob}
\end{align}

In view of Lemma \ref{lem-iteration}, the event in $\left(A_z\right)^c\cap A_{z-1}$ implies that 
\begin{align*}
\sup_{t\in[0,T]} M_z(t)
&\geq \frac{\lambda^{2\theta_0}}{N\nu^z}-\frac{N^{z-1}}{\lambda^{2\alpha_0-2\theta_0(1+\alpha_0)}\nu^{(z-1)(1+{\alpha_0})}}\\
&=\frac{\lambda^{2\theta_0(1+\alpha_0)}}{\nu^{(z-1)(1+\alpha_0)}}\left[
\frac{\nu^{\alpha_0z-1-\alpha_0}}{N\lambda^{2\alpha_0\theta_0}}
-
\frac{N^{z-1}}{\lambda^{2\alpha_0}}
\right].
\end{align*}
Put
$$
\zeta_z= \frac{\lambda^{2\theta_0(1+\alpha_0)}}{\nu^{(z-1)(1+\alpha_0)}}
\quad\text{and}\quad
\kappa_z=
\frac{\nu^{\alpha_0z-1-\alpha_0}}{N\lambda^{2\alpha_0\theta_0}}
-
\frac{N^{z-1}}{\lambda^{2\alpha_0}},
$$
and take 
$$
\theta_0=\frac{1}{4} \quad \text{and}\quad 
\nu=(2N+1)^{\frac{1}{\alpha_0}}. 
$$
There exists $\lambda_0\in (1,\infty)$ such that for any $\lambda \geq \lambda_0$, one has 
$$\kappa_z\geq \frac{(2N+1)^{z}}{\lambda^{\alpha_0}},\quad \forall \,z\in\bN^+.$$

By Lemma \ref{lem-est-prob}, it follows that for any $z\in\bN^+$,
\begin{align*}
\bP \left(\left(A_z\right)^c\cap A_{z-1}\right)
&\leq 
\bP\left(
\left\{
\sup_{t\in[0,T]} M_z(t) \geq \kappa_z\zeta_z,\,\left(U_{z-1}\right)^{1+\alpha_0} \leq \zeta_z
\right\}
\right)\\
&\leq 
\exp\left\{-\frac{\kappa_z^2\lambda^{4\alpha_0}}{2N^z}\right\}
\leq
\exp\left\{-\frac{(2N+1)^{2z}\lambda^{2\alpha_0}}{2N^z}\right\}
\\
&\leq
\exp\left\{ -2^z\lambda^{2\alpha_0} \right\}
\leq \exp\left\{ -z\lambda^{2\alpha_0} \right\},
\end{align*}
which together with relation \eqref{eq-lem-prob} implies estimate \eqref{est-lem-uplus}.
\end{proof}

Finally, equipped with the above estimate on the tail probability, we are now at a position to prove the $L^p$-estimates for the time-space uniform norm of weak solutions.
\begin{proof}[Proof of Theorem \ref{thm-Lp-inf}]
Taking $z=0$ in relation \eqref{eq-ito-uz} and applying H\"older inequality, we have for $0\leq \tau \leq T$,
\begin{align}
&\sup_{t\in[0,\tau]}\|u^+(t)\|^{2}+\int_0^{\tau}\left(\|u^+(s)\|^2_{\eta}+\sum_{k=1}^{d_1}\| L_ku^+(s)\|^2\right)\,ds
\nonumber
\\
&\leq C\bigg\{
\sup_{t\in[0,{\tau}]}\int_0^t \!\!\left\langle u^+(s),\,(-D_i\theta^{ik}u^++2\beta^k u^++2h^k)(s)\,dW^k_s\right\rangle
\nonumber\\
&\quad 
+ \int_0^{\tau} \left(\|h(s)1_{\{u>0\}}\|^2+ 2\left|\left\langle u^+(s),\,(L_k'g^k+f^+)(s)\right\rangle\right|\right)\,ds
+\|\underline{u}^+_0 \|^{2}
\bigg\}\nonumber\\
&
\leq C\bigg\{
\sup_{t\in[0,{\tau}]}\int_0^t \!\!\left\langle u^+(s),\,(-D_i\theta^{ik}u^++2\beta^k u^++2h^k)(s)\,dW^k_s\right\rangle
\nonumber\\
&\quad 
+ \int_0^{\tau} \left(\|h(s)1_{\{u>0\}}\|^2+ \|g(s)1_{\{u>0\}}\|^2 +\|f^+(s)1_{\{u>0\}}\|^2\right)\,ds
+\|\underline{u}^+_0 \|^{2}
\bigg\}\nonumber\\
&+\frac{1}{2} \int_0^{\tau} \left(
\|u^+\|^2+\sum_{k=1}^{d_1}\|L_ku^+(s)\|^2
\right)ds,\quad\text{a.s.,}\nonumber
\end{align}
which implies that
\begin{align}
&\sup_{t\in[0,\tau]}\|u^+(t)\|^{2}+\int_0^{\tau}\left(\|u^+(s)\|^2_{\eta}+\sum_{k=1}^{d_1}\| L_ku^+(s)\|^2\right)\,ds
\nonumber
\\
&\leq C\left\{\sup_{t\in[0,{\tau}]} \tilde{M}_t + \|(f^+,g,h)1_{\{u>0\}}\|_{L^2([0,{\tau}]\times\bR^d)}^2 + \|\underline{u}^+_0 \|^2 \right\},\quad\text{a.s.,}\label{eq-U0-l2}
\end{align}
with
$$
\tilde M_t:=\int_0^t \!\!\left\langle u^+(s),\,(-D_i\theta^{ik}u^++2\beta^k u^++2h^k)(s)\,dW^k_s\right\rangle,\quad t\in[0,T].
$$

Observe that for any $t\in[0,T]$ and $q> 0$,
\begin{align*}
\langle \tilde M\rangle_t^{\frac{q}{2}} 
&=\left(\sum_{k=1}^{d_1}\int_0^t
\left|
\left\langle  u^+(s),\,(-D_i\theta^{ik}u^++2\beta^k u^++2h^k)(s)\right\rangle
\right|^2ds\right)^{\frac{q}{2}}\\
&\leq
C\left(\int_0^t \left( \|u^+\|^4 + \|u^+\|^2\|h1_{\{u>0\}}\|^2   \right)(s)\,ds\right)^{\frac{q}{2}}
\\
&\leq \left(\eps+C\tau^{\frac{q}{2}}\right) \sup_{s\in[0,t]}\|u^+(s)\|^{2q} +  C_{\eps}  \left(\int_0^t\|h1_{\{u>0\}}\|^2 ds\right)^{q}   .
\end{align*}
Take 
$$\eps=\frac{1}{4} \quad \text{and}\quad   
\tau=T \wedge \left(\frac{1}{4C}\right)^{\frac{2}{q}}.
$$
By relation \eqref{eq-U0-l2} and the Burkholder-Davis-Gundy inequality, we have for $q>0$,
\begin{align*}
&E\sup_{t\in[0,\tau]}\|u^+(t)\|^{2q}+E\left[\int_0^{\tau}\left(\|u^+(s)\|^2_{\eta}+\sum_{k=1}^{d_1}\| L_ku^+(s)\|^2\right)\,ds\right]^q
\nonumber
\\
&\leq 
\frac{1}{2} E\sup_{s\in[0,\tau]}\|u^+(s)\|^{2q} +  CE \left[
\left( \|(f^+,g,h)1_{\{u>0\}}\|_{L^2([0,{\tau}]\times\bR^d)}^2 + \|\underline{u}^+_0 \|^2 \right)^{q}\right].
\end{align*}
Starting from the interval $[0,\tau]$,  within $\left\lceil\frac{T}{\tau} \right\rceil$ steps we arrive at
\begin{align*}
E\left(U_0\right)^q
\leq C E\left[\|(f^+,g,h)1_{\{u>0\}}\|_{L^2(Q)}^{2q} + \|\underline{u}^+_0 \|^{2q} \right].
\end{align*}

Taking $q=\frac{p}{2\theta_0}$ in the above inequality, we have by Proposition \ref{lem-uplus},
\begin{align*}
&E\|u^+\|^p_{L^{\infty}(Q)}\\
&=p\int_0^{\infty} \bP\left( \left\{ \|u^+\|_{L^{\infty}(Q)} > \lambda \right\}\right)\lambda^{p-1}\,d\lambda\\
&\leq
\lambda_0^p
+\int_{\lambda_0}^{\infty} \bP\left(\left\{ U_0 > \lambda^{2\theta_0}  \right\}\right)\lambda^{p-1} \,d\lambda
+\int_{\lambda_0}^{\infty} \bP\left(\left\{
\|u^+\|_{L^{\infty}(Q)} > \lambda,\, U_0 \leq \lambda^{2\theta_0}\right\}  \right)\lambda^{p-1} \,d\lambda
\\
&\leq
\lambda_0^p
+\frac{1}{2\theta_0} E\left|U_0\right|^{\frac{p}{2\theta_0}}
+\int_{\lambda_0}^{\infty}  2 \exp\left\{-\lambda^{2\alpha_0 }\right\} \lambda^{p-1} \,d\lambda
\\
&<\infty.
\end{align*}

Hence, in view of Lemmas \ref{lem-iteration} and \ref{lem-uplus}, we have by scaling 
\begin{align*}
E\|u^+\|^p_{L^{\infty}(Q)} \leq C \left(\Lambda^+_{\bar p,\infty } + \Lambda^+_{\frac{p}{\theta_0}}  \right)^p,
\end{align*}
with the constant $C$ depending on $d,p,n_0, T$ and the quantities related to the coefficients $\sigma,\theta,b,c$ and $\beta$. The estimate on $u^-$ follows in a similar way. We complete the proof.
\end{proof}

\begin{rmk}
Theorem \ref{thm-Lp-inf} addresses the $L^p$ ($p>0$) estimates for the time-space uniform norm of weak solutions for possibly \textit{degenerate} SPDE \eqref{SPDE} in the whole space. It seems to be new, even for the \textit{super-parabolic} case (that is $n_0=0$ in $(\cH)$), as the existing results on such kind of estimates for weak solutions of \textit{super-parabolic} SPDEs are restricted in bounded domains (see \cite{D-bounddeness-SPDE-2013,DenisMatoussi2009,DenisMatoussiStoica2005}) with $p\in[2,\infty)$. In fact, our method of De Giorgi iteration in this section is applicable to the local maximum principle for weak solutions of SPDEs in either bounded or unbounded domains, by using the techniques of cut-off functions (see \cite{QiuTangMPBSPDE11} for instance). On the other hand, in Theorem \ref{thm-Lp-inf} as well as in assertion (i) of Theorem \ref{thm-main},  we assume $(\cA1)$ which requires the spatial smoothness of coefficients $\sigma,\theta,b,c$ and $\beta$; in fact, such assumption is made for the sake of simplicity and it can be relaxed in a standard way due to the properties of multipliers in (ii) of Lemma \ref{lem-pdo}. However, we would postpone such generalizations in domains with relaxed assumption $(\cA1)$ to a future work. 
\end{rmk}


\bibliographystyle{siam}
%

\end{document}